\newtheorem{thm}{Theorem}[section]
\newtheorem{lem}[thm]{Lemma}
\newtheorem{cor}[thm]{Corollary}
\newtheorem{prop}[thm]{Proposition}
\theoremstyle{definition}
\newtheorem{question}[thm]{Question}
\newtheorem{remark}[thm]{Remark}
\newtheorem{case}{Case}
\theoremstyle{remark}
\newtheorem{subcase}{Subcase}
\numberwithin{equation}{section}
\def\Z{\ifmmode{{\mathbb Z}}\else{${\mathbb Z}$}\fi}
\def\Q{\ifmmode{{\mathbb Q}}\else{${\mathbb Q}$}\fi}
\def\C{\ifmmode{{\mathbb C}}\else{${\mathbb C}$}\fi}
\def\P{\ifmmode{{\mathbb P}}\else{${\mathbb P}$}\fi}
\def\H{\ifmmode{{\mathrm H}}\else{${\mathrm H}$}\fi}
\def\G{\ifmmode{{\mathbb G}}\else{${\mathbb G}$}\fi}
\def\R{\ifmmode{{\mathbb R}}\else{${\mathbb R}$}\fi}
\def\F{\ifmmode{{\mathbb F}}\else{${\mathbb F}$}\fi}
\def\O{\ifmmode{{\cal O}}\else{${\cal O}$}\fi}
\def\D{\ifmmode{{\cal{D}}^b}\else{${{\cal{D}}^b}$}\fi}
\newcommand{\Qbar}{{\overline{\Q}}}
\newcommand{\Zhat}{{\hat{\Z}}}
\newcommand{\kbar}{{\overline{k}}}
\newcommand{\kk}{{\mathbf k}}
\newcommand{\defi}[1]{\textsf{\textbf{#1}}}
\newcommand{\calA}{{\mathcal A}}
\newcommand{\calE}{{\mathcal E}}
\newcommand{\calH}{{\mathcal H}}
\newcommand{\calO}{{\mathcal O}}
\newcommand{\calR}{{\mathcal R}}
\newcommand{\calU}{{\mathcal U}}
\newcommand{\calZ}{{\mathcal Z}}
\DeclareMathOperator{\Frob}{Frob}
\DeclareMathOperator{\inv}{inv}
\DeclareMathOperator{\Gal}{Gal}
\DeclareMathOperator{\Sym}{Sym}
\DeclareMathOperator{\ev}{ev}
\DeclareMathOperator{\PGL}{PGL}
\DeclareMathOperator{\tors}{tors}
\DeclareMathOperator{\et}{et}
\DeclareMathOperator{\red}{red}
\DeclareMathOperator{\SL}{SL}
\DeclareMathOperator{\PSL}{PSL}
\DeclareMathOperator{\GL}{GL}
\newcommand{\A}{\mathbb{A}}
\newcommand{\injects}{\hookrightarrow}
\newcommand{\isom}{\simeq}
\newcommand{\Br}{\mathrm{Br}}
\title{Rational points on conic bundles over elliptic curves}
\author{Jennifer Berg}
\address{Department of Mathematics, Bucknell University, Lewisburg, PA
17837, USA}
\email{jsb047@bucknell.edu}
\urladdr{http://sites.google.com/view/jenberg}
\author{Masahiro Nakahara}
\address{Department of Mathematical Sciences, University of Bath, Bath, BA2 7AY, UK}
\email{mn634@bath.ac.uk}
\urladdr{https://sites.google.com/view/masahiro-nakahara/home}
\begin{document}

\begin{abstract}
We study rational points on conic bundles over elliptic curves with positive rank over a number field. We show that the \'etale Brauer--Manin obstruction is insufficient to explain failures of the Hasse principle for such varieties. We then further consider properties of the distribution of the set of rational points with respect to its image in the rational points of the elliptic curve. In the process, we prove results on a local-to-global principle for torsion points on elliptic curves over $\Q$.
\end{abstract}

\maketitle

\section{Introduction}

The purpose of this paper is to examine the arithmetic of varieties fibered over elliptic curves of positive Mordell--Weil rank over a number field $k$. The Brauer--Manin and \'etale--Brauer obstructions to rational points on quadric bundles were shown to be insufficient by \cite{ctps}*{\S5} for those varieties fibered over an elliptic curve with a single rational point. We show that the pathologies of these obstructions persist, even for conic bundle surfaces $X$ fibered over positive rank elliptic curves $E$. When $X(k)$ is nonempty, however, the distribution of the $k$-rational points remains largely unexplored. In this context, it is natural to consider properties of the image of $X(k)$ inside of the set of rational points of $E$. We prove that there exist families of conic bundles over elliptic curves defined over $\Q$ such that the image of $X(\Q)$ does not contain a translate of any finite index subgroup in $E(\Q)$. Along the way, we prove a local-to-global principle for torsion on elliptic curves over $\Q$. 

\subsection*{Insufficiency of obstructions}
Over the last two decades, several examples of varieties $X$ over a number field $k$ were constructed without $k$-rational points, but with non-empty obstruction sets $X(\A)^{\Br X}$ and $X(\A)^{\et,\Br}$ ~\cites{Sko99, Poonen10, HaSko14}. In 2016, Colliot-Th\'el\`ene, P\'al, and Skorobogatov \cite{ctps} unified many of the previously known results which each used the trick of a fibration over a curve with a single rational point. They provided methods for constructing new examples of $k$-varieties $X$ fibered over a curve $C$ with a single $k$-rational point, such that $X(k) = \emptyset$, but $X(\A_k)^{\et, \Br} \ne \emptyset$. (For a detailed description of the obstruction sets $X(\A)^{\Br},\, X(\A)^{\et, \Br}$ and the Brauer-Manin pairing, see \cite{Poonen10}.) In particular, their work demonstrated the disparity between the arithmetic of  geometrically rational surfaces, for which the Brauer-Manin obstruction is conjectured to explain all failures of the Hasse principle \cite{CTS80}, and that of conic bundle surfaces over curves of genus at least 1. We prove that the aforementioned obstructions to the Hasse principle are insufficient even when the base curve is allowed to have infinitely many $k$-rational points.


\begin{thm}\label{thm:main1}
There exists an elliptic curve $E$ over a number field $k$ with positive rank over $k$ and a conic bundle $X\to E$ such that $X(\A)^{\et,\Br}\neq\emptyset$ but $X(k)=\emptyset$.
\end{thm}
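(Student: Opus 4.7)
The plan is to adapt the Colliot-Th\'el\`ene--P\'al--Skorobogatov framework to elliptic base curves of positive rank. The key arithmetic input, replacing the ``single rational point on the base'' trick, is the finiteness of the image of $2$-descent on $E$: even when $E(k)$ is infinite, the values of a suitably chosen rational function on $E$ at the $k$-points lie in a finite subgroup of $k^\times / (k^\times)^2$.

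I would start with an elliptic curve $E/k$ of positive rank carrying a rational $2$-torsion point $T_0 = (e_0, 0)$; such curves exist in abundance, for example over $\Q$. The function $f = x - e_0 \in k(E)^\times$ has divisor $2(T_0) - 2(O)$, and the associated descent map
\[
\delta \colon E(k) \setminus \{O, T_0\} \longrightarrow k^\times / (k^\times)^2, \qquad P \longmapsto f(P) \bmod (k^\times)^2,
\]
extends to a group homomorphism whose image $\Sigma$ is a \emph{finite} subgroup of $k^\times / (k^\times)^2$ (classical $2$-isogeny descent, independent of the rank of $E$). I would then choose $a \in k^\times$ such that the Hilbert symbol $(a, s) \in \Br(k)[2]$ is nonzero for every nontrivial $s \in \Sigma$, as well as for the residues of $(a,f)$ at $O$ and $T_0$; this is standard via approximation and Chebotarev, imposing local conditions on $a$ at finitely many primes. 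Let $X$ be a smooth projective model of the conic bundle over $E$ associated to the class $(a, f) \in \Br(k(E))$. For every $P \in E(k) \setminus \{O, T_0\}$, the fiber $X_P$ is the $k$-conic with Brauer class $(a, \delta(P)) \neq 0$, so $X_P(k) = \emptyset$; the fibers above $O$ and $T_0$ likewise carry no smooth $k$-points by the same choice of $a$. Hence $X(k) = \emptyset$.

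It remains to verify $X(\A)^{\et, \Br} \neq \emptyset$. First, adelic solvability is straightforward: at each place $v$, the set of $P_v \in E(k_v)$ with vanishing local symbol $(a, f(P_v))_v$ is open and nonempty, so some $(P_v) \in E(\A)$ lifts to an adelic point of $X$. To eliminate the plain Brauer--Manin obstruction, I would determine $\Br(X)/\Br(k)$ and show that every class is represented by \emph{vertical} elements of the form $(a, g) \in \Br(k(E))$ modulo constants; such classes evaluate trivially on the chosen adelic point by reciprocity on $E$, leveraging the Zariski density of $E(k)$ in $E$. Finally, the \'etale-Brauer step follows the general template of \cite{ctps}: every connected finite \'etale cover $Y \to X$ is, up to twist by a finite $k$-group scheme, pulled back from a finite \'etale cover of $E$, and the analogous Brauer--Manin argument on each twist closes out the verification. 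The principal obstacle is controlling the \'etale-Brauer obstruction on twists, since each twist could a priori acquire new obstructing Brauer classes not pulled back from $E$. The plan is to exploit the rigidity of the conic bundle structure and the finiteness of $\Sigma$ to reduce the verification to finitely many Hilbert-symbol computations, in the spirit of \cite{ctps}.
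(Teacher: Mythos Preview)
Your approach has a fatal gap at the step where you conclude $X(k) = \emptyset$. You correctly observe that the $2$-isogeny descent map $\delta\colon E(k) \to k^\times/(k^\times)^2$ is a group homomorphism with finite image $\Sigma$, and you then choose $a$ so that $(a,s) \neq 0$ for every \emph{nontrivial} $s \in \Sigma$. But precisely because $\delta$ is a homomorphism, its kernel contains all of $2E(k)$; since $E$ has positive rank this kernel is infinite. For any $P \in \ker(\delta)$ with $P \neq O, T_0$, the value $f(P) = x(P) - e_0$ is a nonzero square in $k^\times$, so the fiber $X_P$ is the conic $u^2 - av^2 = (\text{square})\, w^2$, which is split over $k$ and has a $k$-point regardless of $a$. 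Thus your $X$ has $X(k) \neq \emptyset$; in fact $\phi(X(k))$ contains the infinite set $2E(k)\setminus\{O,T_0\}$. No choice of $a$, and no multiplicative translation of $f$ by a constant $c$, can repair this: the image $c\Sigma$ is a coset of a subgroup, and the kernel of $t\mapsto (a,t)$ is itself a subgroup of $k^\times/(k^\times)^2$, so any coset of $\Sigma$ meeting it at all meets it in a full coset of $\Sigma\cap\ker$, which is nonempty.

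The paper sidesteps this by a completely different mechanism. It works over $k = \Q(\sqrt 5)$ with an elliptic curve satisfying $E(k) = E(\Q)\isom\Z$, and builds a Ch\^atelet bundle $u^2 + r(b)v^2 = -p\,\frac{(x-a)(x-b)}{(x_0-a)(x_0-b)}\,w^2$ where $a \in k \setminus \Q$ and $b\in\Q$ lies between $a$ and its conjugate. Because every $P \in E(k)$ has $x(P) \in \Q$ while $a \notin \Q$, a sign check at the two real places shows the right-hand side is negative at at least one archimedean place, so $X_P$ fails to have a real point there; emptiness of $X(k)$ is forced by \emph{real} conditions, not square-class bookkeeping. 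For the \'etale--Brauer step the paper does not argue abstractly about twists but invokes a sharp criterion from \cite{ctps}*{Thm.~5.6}: $E$ has large Galois image, $E(\R)$ is connected, and the bundle has a single nonsplit fiber over a nondivisible closed point. Your sketch does not put in place any analogue of these hypotheses, and in your construction the bundle is nonsplit over the two rational points $O$ and $T_0$, which already violates the ``single nonsplit fiber'' condition needed to push the Brauer surjectivity $\Br(C)\to\Br(Y)$ through on covers.
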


\subsection*{Distribution of rational points on conic bundles over elliptic curves} The primary focus of this paper is devoted to studying the distribution of $\Q$-rational points on conic bundle surfaces $\phi \colon X \to E$. In \S\ref{sec:distribution} we focus on the case of a Ch\^atelet conic bundle over an elliptic curve which is a smooth projective model over $E$ of the conic
$$u^2-av^2=f(x)w^2$$
over the function field of $E$ (see \S\ref{subsec: ChateletSurfE} for the precise definition). We investigate whether $\phi(X(\Q))$, considered as a subset of $E(\Q)$, contains a translate of a finite index subgroup. To that end, we define a set $\Omega$ of primes, depending on $\phi \colon X\to E$, and show that if for any integer $n$, there are infinitely many primes $p\in\Omega$ for which
\begin{equation}\label{eqn:gcdintro}
\gcd(n,|E(\F_p)|)=1,
\end{equation}
then $\phi(X(\Q))$ contains a finite index subgroup of $E(\Q)$. In \S\ref{sec:torsionpoints}, we give a sufficient condition for \eqref{eqn:gcdintro} to hold, thereby giving the following result:

\begin{thm}\label{thm:main2}
Let $E$ be an elliptic curve over $\Q$ with $E(\Q)\isom\Z^r$ for some $r>0$, satisfying some mild conditions on the Galois representation for torsion points. Then for any Ch\^atelet conic bundle $X$ over $E$ with a singular fiber over a point $P {\neq \calO}\in E(\Q)$, the image of $X(\Q)$ in $E(\Q)$ does not contain a translate of a finite index subgroup inside $E(\Q)$.
\end{thm}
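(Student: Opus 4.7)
Suppose for contradiction that $\phi(X(\Q))$ contains a translate $Q_0 + H$ of a finite-index subgroup $H \le E(\Q)$; setting $n := [E(\Q):H]$ and replacing $H$ by $nE(\Q) \subseteq H$, we may assume $\phi(X(\Q)) \supseteq Q_0 + nE(\Q)$. The strategy is to choose a prime $p$ in the set $\Omega$ from \S\ref{sec:distribution} at which a local obstruction near the singular fiber at $P$ contradicts this containment.

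Concretely, for a prime $p \in \Omega$ (a prime of good reduction inert in $\Q(\sqrt a)$ with further conditions tied to $\bar P$), membership of $Q \in E(\Q_p)$ in $\phi(X(\Q_p))$ forces $f(Q) \in N_{\Q_p(\sqrt a)/\Q_p}(\Q_p(\sqrt a)^\times)$, equivalently $v_p(f(Q))$ even. Since $f(P)=0$, near $P$ this translates into a parity condition on $v_p(Q-P)$. Now set $R_p := \mathrm{image}(E(\Q)\to E(\F_p))$, a subgroup of $E(\F_p)$ containing both $\bar P$ and $\bar Q_0$. When $\gcd(n,|E(\F_p)|)=1$, multiplication by $n$ is a bijection on $E(\F_p)$ and hence on $R_p$, so $nR_p=R_p$; consequently the reduction of $Q_0+nE(\Q)$ in $E(\F_p)$ is the coset $\bar Q_0+R_p$, which contains $\bar P$. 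Lifting this group-theoretic observation to the $p$-adic level via the formal group of $E$ (using the fact that the $p$-adic closure of $E(\Q)$ in $E(\Q_p)$ is sufficiently large—an open subgroup for suitably chosen $p$), one finds members of the coset $Q_0+nE(\Q)$ approaching $P$ with both parities of $v_p(Q-P)$, contradicting the parity condition above.

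It remains to produce, for every $n\in\Z_{>0}$, infinitely many primes $p\in\Omega$ with $\gcd(n,|E(\F_p)|)=1$ and with the $p$-adic density required. This is precisely what the local-to-global principle for torsion proved in \S\ref{sec:torsionpoints} delivers under the mild Galois representation hypotheses of the theorem: via a Chebotarev density argument on the compositum of $\Q(\sqrt a)$, $\Q(E[n])$, and the fields cutting out the reduction behavior of $P$, one extracts a positive-density set of primes satisfying all required conditions.

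\textbf{Main obstacle.} The technical crux is the joint Chebotarev argument of the last step. It must simultaneously impose (i) $p$ inert in $\Q(\sqrt a)$, (ii) good reduction of $E$ and $X$ at $p$, (iii) the reduction $\bar P$ lying in the "obstructing" locus of $E(\F_p)$ prescribed by $\Omega$, and (iv) $\gcd(n,|E(\F_p)|)=1$. The hypothesis $P\ne\calO$ is essential for (iii)—it ensures $\bar P$ is non-trivially placed in $E(\F_p)$—while the Galois hypotheses on $E[\ell]$ for $\ell\mid n$ provide enough independence among the associated representations for (iv) to hold with positive density compatible with (i)–(iii). Establishing this independence, so that all four conditions can be enforced at once on an infinite set of primes, is exactly the content of the results in \S\ref{sec:torsionpoints}.
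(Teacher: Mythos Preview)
Your plan matches the paper's proof: assume a coset $Q_0+nE(\Q)\subseteq\phi(X(\Q))$, pick $p\in\Omega$ with $\gcd(n,|E(\F_p)|)=1$ so that the reduction of the coset is all of $\red_p(E(\Q))$ and hence meets $\red_p(\calR_p)$, then use the formal group to produce a point of the coset whose fibre is $\Q_p$-insoluble; finally invoke \S\ref{sec:torsionpoints} to supply infinitely many such $p$. Two small points where your write-up is looser than the paper. First, the lifting step: rather than appealing to ``the $p$-adic closure of $E(\Q)$ is open'' (which by itself does not immediately yield control of the parity of $v_p$), the paper uses the concrete trick $p^j\equiv 1\pmod n$ to replace $T_0:=S-P$ by $p^jT_0$ inside the same coset, so that $v_p(\xi(p^jT_0))=j+v_p(\xi(T_0))$ can be made odd---this is Lemma~\ref{lem:elladdition} together with the formal-group valuation formula. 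Second, your Chebotarev list is longer than needed: once a singular fibre sits over some $P\neq\calO$ in $E(\Q)$, the paper shows (just before Theorem~\ref{thm:arithprog}) that $\Omega$ already contains all but finitely many primes inert in $\Q(\sqrt a)$, so there is no extra condition ``(iii)'' on $\bar P$ and no auxiliary field ``cutting out the reduction behaviour of $P$'' to throw into the compositum---only $\Q(\sqrt a)$ and the division fields $\Q(E[\ell])$ enter.
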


See Theorem \ref{thm:arithprog} for the precise conditions on the Galois representations of $E$. In particular, any elliptic curve with no exceptional primes satisfies these conditions. Furthermore, we prove that when $E$ has full 2-torsion subgroup defined over $\Q$ (hence \eqref{eqn:gcdintro} fails miserably), it is possible for $\phi(X(\Q))$ to contain a translate of a finite index subgroup of $E(\Q)$.

\subsection*{Local-to-global properties for reduction of elliptic curves} The last part of the paper is dedicated to proving that under mild hypotheses on $E$, \eqref{eqn:gcdintro} holds for infinitely many primes $p\in\Omega$. A necessary condition is that $E'(\Q)[n]=0$ for any isogenous elliptic curve $E'$ and integer $n$. One can ask whether this condition is sufficient, i.e., whether a local-to-global principle holds for torsion points. Katz proved that this holds for a similar question for the property that $n\!\mid\! \#E(\Q)_\textup{tors}$~\cite{katz}. However, even when $n$ is a prime, this does not imply that \eqref{eqn:gcdintro} holds for infinitely many primes $p\in\Omega$ as we discuss in \S\ref{sec:torsionpoints}. For other results concerning local-to-global principles for elliptic curves, see~\cite{Vogt18}, \cite{sut12}.

Let $\zeta_3$ denote a primitive $3$rd root of unity. Let $\rho_\ell\colon \Gal(\Qbar/\Q)\to \GL_2(\Z/\ell\Z)$ be the Galois representation associated to $\ell$-torsion points. The main result of this section is as follows:

\begin{thm}\label{thm:main3}
Let $K/\Q$ be a nontrivial abelian extension not contained in $\Q(E[2])$ or $\Q(\zeta_3)$. Let $n>1$ be an integer and suppose that $\rho_\ell$ is surjective for all except possibly one odd prime $\ell$ dividing $n$. If $\rho_\ell$ is not surjective and $2\ell \mid n$, assume further that $\Q(E[2])\cap~\Q(E[\ell])=\Q$. Then the following are equivalent:
\begin{enumerate}
    \item $E(\F_p)[n]\neq0$ for all but finitely many primes $p$ that do not split completely in $K$.
    \item There exists a curve $E'$ $\Q$-isogenous to $E$ such that $E'(\Q)[n]\neq0$.
\end{enumerate}
\end{thm}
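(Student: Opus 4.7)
The implication $(2)\Rightarrow(1)$ is direct: if $E'$ is $\Q$-isogenous to $E$ with a nontrivial $\Q$-rational point of some order $d\mid n$, then for all but finitely many primes $p$ reduction is injective on this point, so $d\mid \#E'(\F_p)=\#E(\F_p)$, and in particular some prime $\ell\mid d\mid n$ divides $\#E(\F_p)$, giving $E(\F_p)[\ell]\neq 0$ and hence $E(\F_p)[n]\neq 0$. This holds for almost all $p$ with no hypothesis on $K$ needed, so the entire content of the theorem is the converse.

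I would prove $(1)\Rightarrow(2)$ by contraposition. Assume (2) fails, so that for every $\Q$-isogenous $E'$ and every prime $\ell\mid n$, one has $E'(\Q)[\ell]=0$. Translating through $\ell$-power isogenies, this means the $\Gal(\Qbar/\Q)$-representation $\rho_\ell$ on $E[\ell]$ has neither a trivial subrepresentation nor a trivial quotient, so $\rho_\ell$ is either absolutely irreducible or conjugate to $\bigl(\begin{smallmatrix}\chi_1 & * \\ 0 & \chi_2\end{smallmatrix}\bigr)$ with both diagonal characters nontrivial. In either case I would extract a nonempty, conjugation-stable subset $S_\ell\subseteq G_\ell:=\rho_\ell(\Gal(\Qbar/\Q))$ consisting of matrices without eigenvalue $1$: for surjective $\rho_\ell$ this is easy (take $2I$ for odd $\ell$, or a $3$-cycle under $\GL_2(\F_2)\cong S_3$ for $\ell=2$), and for the possible exceptional odd $\ell$ in the reducible case, the relation $\det\rho_\ell=\chi_{\mathrm{cyc}}$ together with $\chi_1,\chi_2\neq 1$ forces the image of $(\chi_1,\chi_2)\colon\Gal(\Qbar/\Q)\to\F_\ell^*\times\F_\ell^*$ to contain a pair with neither coordinate equal to $1$.

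The proof is then completed by Chebotarev: I seek $\sigma\in\Gal(\Qbar/\Q)$ with $\sigma|_K\neq\id$ and $\rho_\ell(\sigma)\in S_\ell$ for every $\ell\mid n$, since Chebotarev's density theorem furnishes a positive density of primes $p$ whose Frobenius shares these properties, yielding primes $p$ not splitting completely in $K$ with $E(\F_p)[n]=0$, contradicting (1). Existence of such a $\sigma$ reduces to a linear disjointness computation for the compositum $L=K\cdot\prod_{\ell\mid n}\Q(E[\ell])$: the image of $\Gal(\Qbar/\Q)$ in $\Gal(K/\Q)\times\prod_{\ell\mid n}G_\ell$ must be large enough to project nontrivially to $\Gal(K/\Q)$ while landing in $S_\ell$ at each factor. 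Using that the maximal abelian subextension of $\Q(E[\ell])$ is $\Q(\zeta_\ell)$ for odd $\ell$ (when $\rho_\ell$ is surjective) and is contained in $\Q(E[2])$ for $\ell=2$, the hypotheses $K\not\subseteq\Q(E[2])$, $K\not\subseteq\Q(\zeta_3)$, and $\Q(E[2])\cap\Q(E[\ell])=\Q$ for the exceptional $\ell$ are exactly the input needed for a Goursat-style analysis of subgroups of products of $\GL_2(\F_\ell)$'s with an abelian factor.

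The main obstacle is this final disjointness step. When some $\rho_\ell$ fails to be surjective, linear disjointness between $\Q(E[\ell])$ and $\Q(E[2])$ is no longer automatic from Serre's open image theorem, which is why the supplementary assumption on the intersection is imposed when $2\ell\mid n$. The delicacy around $\Q(\zeta_3)$ reflects that $\Q(\zeta_3)\subseteq\Q(E[3])$ via the Weil pairing, so if $K\subseteq\Q(\zeta_3)$ then the prescription $\sigma|_K\neq\id$ would be dictated entirely by $\det\rho_3(\sigma)$ and could conflict with the requirement $\rho_3(\sigma)\in S_3$; excluding this embedding is precisely what the hypothesis achieves.
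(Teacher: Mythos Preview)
Your overall approach matches the paper's: $(2)\Rightarrow(1)$ is immediate, and for $(1)\Rightarrow(2)$ one seeks, via Chebotarev, a class $\sigma$ with $\sigma|_K\neq\id$ and $\rho_\ell(\sigma)$ fixing no nonzero vector for every prime $\ell\mid n$. The existence of such an element at each individual $\ell$ is the easy part; the substance of the theorem lies entirely in the compatibility across different $\ell$ and with $K$, and here your sketch has a genuine gap.

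First, you argue purely from the failure of $(2)$, which tells you only that $\rho_\ell$ has no trivial sub- or quotient representation for the exceptional odd $\ell$. The paper extracts more: it uses hypothesis $(1)$ itself, applying Chebotarev to the primes $\ell'\neq\ell$ dividing $n$ to produce a prime $p$ with $E_p(\F_p)[n/\ell]=0$, whence $E_p(\F_p)[\ell]\neq 0$ by $(1)$. This yields a torsion point $P\in E[\ell]$ with $\det\rho_\ell\bigl(\Gal(k_\ell/\Q(P))\bigr)=\F_\ell^\times$, which pins the image of $\rho_\ell$ down to a short explicit list (Lemma~\ref{GaloisImage}). Your dichotomy ``absolutely irreducible or upper-triangular with both characters nontrivial'' is correct but coarser, and you give no argument for $S_\ell\neq\emptyset$ in the irreducible non-surjective case.

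Second, and more seriously, labelling the combining step ``a Goursat-style analysis'' based on ``linear disjointness'' understates what is required. The division fields are \emph{not} linearly disjoint in general: one can have $k_2\subset k_3$, and the paper spends several pages on the resulting case analysis. Corollary~\ref{cor:k_2intk_n} shows that $k_2\subset k_m K$ forces $3\mid m$ and $k_2\subset k_3$; Corollary~\ref{cor:nonfix2n} then exhibits an explicit element of $\GL_2(\F_3)$ lying in $NF_3$ whose image in $\Gal(k_2/\Q)\cong S_3$ has order $3$ and whose determinant is prescribed. For the exceptional $\ell$ one must further analyse the maximal abelian subextension $k_\ell'\subset k_\ell$ and its interaction with $k_2$ and $K$ (this is exactly where the hypothesis $k_2\cap k_\ell=\Q$ enters), and the exceptional $S_4$-image for $\ell=5$ requires separate explicit matrix computations. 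None of this follows from abstract Goursat considerations; it is a hands-on construction of a compatible system of Frobenius constraints, and it constitutes essentially the entire proof.
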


We may take $K=\Q(\sqrt{a})$ in our application to studying the image of rational points. We briefly discuss the necessity of these conditions on $E$ in Theorem \ref{thm:main3} in the end.

This paper is organized as follows. In \S\ref{conic}, we first prove that the Brauer-Manin obstruction is insufficient to explain all failures of the Hasse principle for conic bundles over many varieties whose rational points fail to be dense in the product of the real topologies. We then prove Theorem \ref{thm:main1}. In \S\ref{sec:distribution}, we turn our attention to the distribution of rational points on conic bundles over elliptic curves. We prove Theorem \ref{thm:main2} using Theorem \ref{thm:main3}. In \S\ref{sec:torsionpoints}, we study properties of torsion points on elliptic curves and prove Theorem \ref{thm:main3}.

\subsection*{Acknowledgements} We thank J.-L. Colliot-Th\'el\`ene, Dan Loughran and Tony V\'arilly-Alvarado for fruitful discussions and their comments on a preliminary draft of this paper. We thank the anonymous reviewer for their suggestions on both mathematical content and exposition. Part of the work was done at the Institut Henri Poincar\'e, we thank the IHP for their hospitality.

\section{Insufficiency of the Brauer--Manin obstructions}\label{conic}

For definition and overview of the Brauer--Manin obstruction, see \cite{sko96}*{\S5.2}. A recurring technique in showing failures of the Brauer--Manin obstruction is to use the real places of the number field $k$ (e.g., \cite{ctps}). More precisely, given an adelic point $\{P_v\}\in X(\A)$, we can modify the point $P_\infty$ by some other point $Q_\infty$ on the same real component of $X_\R$, without changing the Brauer--Manin pairing. One can then construct a fibration $Y\to X$ such that for each $v$, the fiber above $P_v$ has $k_v$-points, but no rational point on $X$ can approximate $\{P_v\}$. Conic bundles are often used in this setting since their Brauer groups are well understood and can often be expressed in terms of the Brauer group of the base.

Recall from \cite{sko96}*{Definition 0.1} that a scheme of finite type over a perfect field $k$ is called \defi{split} if it contains a geometrically irreducible component of multiplicity one.

\subsection{Example over a general base using real places}
\begin{thm}\label{BMinsuff} Let $X$ be a geometrically irreducible projective variety of dimension $\geq2$ over a number field $k$ such that $X(k)\neq\emptyset$. Assume further that $X(\R)$ is connected for all real embeddings of $k$. 
Let $d=[k:\Q]$ and write
$d=r+2s$,
where $r$ is the number of real embeddings and $s$ is the number of complex embeddings.
\begin{enumerate}
\item If $r$ is even,  suppose that $X(k)\to\prod_{v\text{ real}} X(k_v)$ is not dense. 
\item  If $r$ is odd, suppose that there is a real place $v_0$ such that $X(k)\to \prod_ { \substack{v\text{ real} \\  v\neq v_0}}  X(k_v)$ is not dense.

\end{enumerate}
Then there is a conic bundle $Y\to X$ such that $Y(\A)^\Br\neq\emptyset$ but $Y(k)=\emptyset$.
\end{thm}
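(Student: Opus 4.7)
The plan is to construct $Y$ as a smooth projective model of the conic bundle over $X$ defined generically by $u^2 - a v^2 = f$, for carefully chosen $a \in k^*$ and $f \in k(X)^*$, following the template of \cite{ctps}*{\S 5}. The sign-based idea is that $a$ will be negative at all relevant real places, so that a $k_v$-point on $Y$ above $P_v \in X(k_v)$ forces $f(P_v) > 0$ there; coupled with a function $f$ whose positivity locus at real places avoids the diagonal image of $X(k)$, this prevents any $k$-point on $Y$ while still allowing an adelic point.

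First I would choose $a \in k^*$ by weak approximation so that $a < 0$ at every real place in Case~(1), or at every real place except $v_0$ in Case~(2), and a unit outside a finite set of finite places. The parity condition -- that the number of real places with $a < 0$ be even -- is forced by Hilbert reciprocity $\sum_v \inv_v(a,\cdot) = 0$, and is precisely the reason for the case split in the theorem. Next, using the non-density assumption, I would fix a nonempty open box $W = \prod_v W_v$ inside the product of real points (over the real places appearing in the hypothesis) disjoint from the image of $X(k)$, and construct a rational function $f \in k(X)^*$ such that for each such $v$ the set $\{P \in X(k_v) : f(P) > 0\}$ is nonempty and contained in $W_v$. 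Producing such an $f$ whose divisor is also regular enough that the model $Y$ has controllable vertical Brauer group is the central technical task; I would carry it out by embedding $X$ via a very ample divisor and taking $f$ as a ratio of products of hyperplane sections whose positivity regions at each real place approximate $W_v$, using the density of $k$ in $\prod_v k_v$ to realize the coefficients over $k$.

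Given such $a$ and $f$, I would exhibit an adelic point of $Y$ by starting from a rational point $P_0 \in X(k)$ and perturbing locally where $f(P_0)$ fails to be a local norm from $k_v(\sqrt a)$: at real places, move within the connected $X(\R)$ into $W_v \cap \{f > 0\}$, which does not disturb the locally constant real Brauer invariants; at finite places, use the openness of local norms together with smoothness of $X$. Orthogonality against $\phi^* \Br X$ is preserved because the starting $k$-point $P_0$ pairs trivially with $\Br X$ and the perturbations leave local invariants unchanged; orthogonality against the remaining classes in $\Br Y$, controlled by the conic bundle structure as in \cite{ctps}, reduces to the vanishing of $\sum_v \inv_v(a, f(P_v))$, which holds since $f(P_v)$ is a local norm at every $v$ by construction. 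Finally $Y(k) = \emptyset$, because any $k$-point would project to some $Q \in X(k)$ with $f(Q)$ a global norm from $k(\sqrt a)$, forcing $f(Q) > 0$ at every real place where $a < 0$; this contradicts $X(k) \cap W = \emptyset$ and the choice of positivity locus of $f$.

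The hard part will be the construction of $f$: we need prescribed real-place sign behavior (a nonempty positivity locus inside each $W_v$) together with a regular enough divisor for the conic bundle to admit a smooth projective model with well-controlled Brauer group. The connectedness of $X(\R)$ at each real place is crucial -- it makes local Brauer invariants constant on $X(k_v)$, so that real-place perturbations do not disturb orthogonality against $\Br Y$, and it means we need to dodge $X(k)$ in only a single connected region per place. The exclusion of $v_0$ in Case~(2) is dictated entirely by the parity constraint from reciprocity.
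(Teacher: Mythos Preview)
Your overall strategy matches the paper's, but there is a genuine gap in the Brauer group step. You say orthogonality against the classes in $\Br(Y)$ not coming from $\Br(X)$ ``reduces to the vanishing of $\sum_v \inv_v(a, f(P_v))$''. This is not correct: the class $(a,f)$ is \emph{trivial} in $\Br(Y)$ (it is the class of the generic fibre, which is split by $Y$ itself), so it imposes no condition. The actual extra classes are of the form $(a,h)$ where $\divv(h)$ is supported on a \emph{proper} subset of the irreducible components of $\divv(f)$ over which the fibre is non-split, and orthogonality must be checked against each of these separately. Nothing in your construction of $f$ as a ``ratio of products of hyperplane sections'' prevents $\divv(f)$ from having several such components, and then you have no mechanism for controlling the invariants of these intermediate classes at your perturbed adelic point.

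The paper resolves this by arranging that the conic bundle has \emph{exactly one} non-split fibre, so that $\Br(Y)=\pi^*\Br(X)$ by \cite{ctps}*{Prop.~2.2} and there are no extra classes at all. Concretely, $f$ is taken to be (the pullback of) a single quadric section $g$ of the ambient $\P^n$, built so that at each relevant real place its positivity locus lies inside $W_v$; then Bertini irreducibility---and this is exactly where the hypothesis $\dim X\ge 2$ enters---lets one perturb the coefficients so that $\{g=0\}\cap X$ is irreducible. Your proposal never uses $\dim X\ge 2$, which is a sign that this step is missing. A secondary point: your treatment of the finite places (``openness of local norms together with smoothness'') is too vague; the paper instead scales $f$ so that the fibre over a fixed rational point $z\in X(k)$ is a conic chosen \emph{in advance} to be locally soluble at every finite place, and then takes the adelic point to sit over $z$ at all finite $v$.
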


\begin{proof}
If $r$ is even, let $U=\prod_{v\text{ real}} U_v$ where each $U_v\subseteq X(k_v)$ is a nonempty analytic open set, such that $U$ does not intersect $X(k)$.
If $r$ is odd, let $U=\prod_{\substack{v\text{ real} \\ v\neq v_0}} U_v$ where each $U_v\subseteq X(k_v)$ is a nonempty open set, such that $U$ does not intersect $X(k)$.

Consider the conic
\[ z_0^2+az_1^2=bz_2^2 \]
with $a,b \in k$ and $a$ is totally positive (positive under all real embeddings of $k$).  If $r$ is even, assume that this conic has local points everywhere except at the real places. If $r$ is odd, then suppose this conic has local points everywhere except at real places other than $v_0$. We now build a conic bundle $Y$ over $X$ such that one of the fibers will be isomorphic to this conic.

Let $\phi \colon X \to \P^n$ be an embedding into projective space.  Let $(u_v)\in U$. Pick projective coordinates $x_0,
\ldots,x_n$ for $\P^n$ such that the affine patch $\calU\subset \P^n$ defined by $x_n\neq0$ contains $\phi(u_v)$ for each real place $v$ and a rational point in $\phi(X(k))$. There exists an open set $W_v\subset \P^n(k_v)$ containing $\phi(u_v)$, such that $W_v\cap \phi(X(k_v))\subseteq \phi(U_v)$. Let $y=[y_0,\ldots,y_n]\in \P^n(k)$ be a point which approximates $u_v$ for each $v$ arbitrarily closely. Define a section $g\in \calO_{\P^n}(2)$ whose restriction to $\calU$ is given by
$$g|_\calU=-\sum_{i=0}^{n-1}(x_i/x_n-y_i/y_n)^2+\epsilon,$$
where $\epsilon$ is a small enough positive rational number such that $g$ is negative on $\P^n(k_v)\setminus W_v$ but $g(u_v)$ is positive. Note that the hypersurface defined by $g=0$ is smooth. Also assume there exists a point $z\in X(k)$ such that $\phi(z)\in \calU$ and $g(\phi(z))$ is totally negative. Thus $g$ is negative on $\phi(X)(k_v)\setminus \phi(U_v)$.

Write $g|_\calU=g_1/g_2$ where $g_1,g_2$ are homogeneous quadratic polynomials in the coordinate ring $k[x_0,\ldots,x_n]$. By the Bertini irreducibility theorem, since $\dim X \ge 2$, there is a dense open set of quadric hypersurfaces $H$ in $\P^n$ such that $H\cap X$ is irreducible. Hence we deform the coefficients of $g_1$ slightly, so that $g$ is still negative on $\phi(X)(k_v)\setminus \phi(U_v)$ and positive at $u_v$, while making the intersection of $V:=\{g_1=0\}$ and $X$ irreducible. Define $L=\phi^*\calO(1)$ and $f=\phi^*g\in L^{\otimes2}$. Let $\calE$ be the vector bundle $\calO_X\oplus\calO_X\oplus L$. Define the section
$$s:=1\oplus a\oplus \frac{b}{f|_{\calU}(z)}f\in\Gamma(X,\calO_X\oplus\calO_X\oplus L^{\otimes2})\subset \Gamma(X,\Sym^2\calE).$$
Then the vanishing of $s$ defines a conic bundle $\pi\colon Y\to X$ with a dense open affine subset given by
\[ z_0^2+az_1^2=\frac{b}{f|_{\calU}(z)}f|_\calU. \]
For any real place $v$, $Y$ has $v$-adic points over $u_v$. For any finite place $v$, the fiber above $z$ has $v$-adic points by hypothesis. Thus $Y$ is everywhere locally soluble. Since for each $U_v$, there are no $v$-adic points above $X(k_v)\setminus U_v$, and $X(k)\cap U=\emptyset$, so $Y$ has no rational points.

Let $(y_v) \in Y(\A)$ be an adelic point of $Y$ which maps to $z$ for each finite place $v$. Since $X(\R)$ has only one connected component, the map $\ev_\calA\colon X(k_v)\to \Q/\Z$ is constant on any real place $v$ and $\calA\in\Br(X)$. Hence $\sum_v\inv_v \calA(\pi(y_v))=\sum_v\inv_v\calA(z_v)=0$ where $(z_v)$ is the image of $z\in X(k)$ in $X(\A)$. Thus $\pi((y_v))\in X(\A)^\Br$. The irreducibility of $\phi^*g_1=0$ implies that there exists some codimension one point $P$ on $X$ such that for any other codimension one point $Q\neq P$ on $X$, the fiber $Y_{k(Q)}$ is nonsplit. Hence $\Br(Y) = \pi^{\ast}\Br(X)$ by~\cite{ctps}*{Prop 2.2}, and so $(y_v) \in Y(\A)^{\Br}$. Thus, there is no obstruction arising from $\Br(Y)$.
\end{proof}

\begin{remark} The proof of Theorem \ref{BMinsuff} necessitates the existence of at least 2 real embeddings of the ground field $k$. In particular, it does not work over $k = \Q$. The proof furthermore requires $X$ to have dimension at least 2, but this condition is likely not necessary. However, in order to obtain examples for which the \emph{\'etale-Brauer} obstruction is insufficient to explain failures of the Hasse principle, we restrict our focus to a class of conic bundle surfaces.  
\end{remark}

\subsection{Ch\^atelet conic bundles over elliptic curves} \label{subsec: ChateletSurfE}
Let $E/k$ be an elliptic curve of positive rank over number field $k$ with short Weierstrass equation
$ \label{eqn: Eoverk} y^2=x^3+bx+c, $
and let $\pi\colon E\to \P^1_x$ be the morphism sending $(x,y)$ to $x$. Consider the Ch\^atelet surface $ Y\to \P^1$ defined by
\begin{equation} \label{eqn: conicbundle} u^2-av^2=f(x)w^2,\end{equation} 
where $f(x)$ is a separable polynomial in $k[x]$ and $a\in k^\times\backslash k^{\times2}$ is square free. Suppose that the ramification locus of $Y\to\P^1$ is disjoint from the ramification locus of the double cover $E \to \P^1$. Let $X:=Y\times_{\P^1} E$, which will necessarily be smooth by the requirements on the ramification of the maps involved (\cite{poonen}). We will henceforth refer to such $\phi\colon X\to E$ as a \defi{Ch\^atelet conic bundle over an elliptic curve} given by \eqref{eqn: conicbundle}. The generic fibre $X_{k(E)}$ is the conic defined by the quaternion algebra $\left(a, f(x)\right) \otimes_{\kk(\P^1)} \kk(E)$. If this algebra ramifies at $P\in E(k)$, then the fiber above $P$ has no $k$-points.

The following theorem is motivated by the construction in \cite{ctps}*{\S5}, in which they produced a conic bundle over an elliptic curve with a single rational point such that the \'etale Brauer--Manin obstruction was insufficient. We construct such an example of Ch\^atelet conic bundle over an elliptic curve of positive rank following their techniques.

\begin{thm} \label{Cor: BMO real quad} There exists a real quadratic field $k$, an elliptic curve $E$ with a point $Q\in E$ and a Ch\^atelet conic bundle $\phi \colon X \to E$ satisfying the following properties:
\begin{itemize}
\item the fibers of $\phi \colon X \to E$ are conics;
\item $E(k)=\Z$ and all fibers above $E(k) \setminus Q$ are split;
\item $X(k)=\emptyset$ and $X(\A)^{\et,\Br}\neq\emptyset$.
\end{itemize}
\end{thm}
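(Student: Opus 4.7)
The plan is to adapt the rank-zero construction of \cite{ctps}*{\S5} to the positive-rank setting by working over a real quadratic base field, so that the two real places supply enough topological room for a Theorem \ref{BMinsuff}-style obstruction to apply. First, I would pick a rank-one elliptic curve $E_0/\Q$ with trivial torsion and a real quadratic field $k$ chosen so that the quadratic twist of $E_0$ by the discriminant of $k$ has rank zero and no new torsion appears; then $E := (E_0)_k$ satisfies $E(k) \isom \Z$. Write $P_0$ for a generator and $v_1, v_2$ for the two real places of $k$. Since the cyclic group $\Z P_0$ has closure of real dimension at most $1$ inside the two-dimensional Lie group $E(k_{v_1}) \times E(k_{v_2})$, there exist nonempty analytic open sets $U_{v_i} \subset E(k_{v_i})$ with $(U_{v_1} \times U_{v_2}) \cap E(k) = \emptyset$.

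Next, I would construct $a \in k^\times \setminus k^{\times 2}$ and a separable $f \in k[x]$ whose unique $k$-rational root is $x(Q)$ for a chosen $Q \in E(k)$, such that the Ch\^atelet conic bundle $\phi\colon X \to E$ defined by $u^2 - av^2 = f(x)w^2$ satisfies: (a) $a$ is negative at both real places, so the fiber over $P$ has no $v_i$-adic point whenever $f(x(P)) \le 0$ at $v_i$; (b) for every $P \in E(k) \setminus \{Q\}$, the value $f(x(P))$ is negative at $v_1$ or at $v_2$; (c) there is an adelic point of $X$ whose archimedean projections on $E$ lie in $U_{v_1} \times U_{v_2}$; (d) the bundle is everywhere locally soluble at finite places, and the ramification of $Y \to \P^1$ is disjoint from that of $E \to \P^1$, ensuring $X$ is smooth. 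Condition (b) can be arranged because the image of $E(k) \setminus \{Q\}$ inside $\R^2$ (via the $x$-coordinate composed with the real embeddings) lies in a $1$-dimensional subset, so the positivity intervals of $f$ under the two embeddings can be chosen to avoid this subset while containing the $x$-projection of $U_{v_1} \times U_{v_2}$; the remaining requirements are finitely many local conditions met by weak approximation on the coefficients of $f$. Once (a)--(d) hold, any $k$-point of $X$ must either map to $Q$ (impossible since the fiber $u^2 - av^2 = 0$ has no $k$-point when $a \notin k^{\times 2}$) or to some $P \in E(k) \setminus \{Q\}$ (impossible by (a)--(b)), giving $X(k) = \emptyset$. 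The adelic point from (c), together with the standard Brauer group calculation for Ch\^atelet bundles (where the only non-split fibers lie over the roots of $f$), then witnesses $X(\A)^{\Br} \neq \emptyset$.

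Finally, to upgrade Brauer--Manin to \'etale-Brauer insufficiency, I would follow the torsor analysis in \cite{ctps}*{\S5}. Every connected finite \'etale cover of $X$ descends to a connected finite \'etale cover of $E$ (since the fibers of $\phi$ are geometrically simply connected), and any such cover is a twist of an isogeny $\psi\colon E' \to E$ of elliptic curves. Pulling the Ch\^atelet bundle back along $\psi$ yields a Ch\^atelet conic bundle $X' \to E'$ whose non-split fibers above $E'(k)$ lie over the finite set $\psi^{-1}(Q) \cap E'(k)$. Applying the same real-places non-density argument to $E'(k)$ (of rank at most one) in $E'(k_{v_1}) \times E'(k_{v_2})$, together with the pushforward of the adelic point from (c), gives $X'(\A)^{\Br} \neq \emptyset$ for every twist of every isogeny, and hence $X(\A)^{\et, \Br} \neq \emptyset$. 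The main obstacle is to arrange condition (b) together with the Brauer--Manin vanishing simultaneously for $X$ and for all isogenous pullbacks $X' \to E'$: one must choose $f$ so that $\{f > 0\}$ under each real embedding separates the closure of every $E'(k)$ (for the relevant $E'$) from $U_{v_1} \times U_{v_2}$, while making the sum of local invariants of the quaternion algebra $(a, f(x))$ vanish on the chosen adelic point. As in \cite{ctps}, this reduces to finitely many open conditions on $f$ once $U_{v_1} \times U_{v_2}$ has been fixed to avoid the closure of $E(k)$ and the finite set of bad primes has been identified.
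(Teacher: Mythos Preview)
Your sketch has the right shape but contains a genuine error that breaks the argument. You place the unique $k$-rational root of $f$ at $x(Q)$ for some $Q\in E(k)$ and then claim that the fiber $u^2-av^2=0$ over $Q$ has no $k$-point because $a\notin k^{\times2}$. This is false: that degenerate conic sits in $\P^2_{u,v,w}$ and always contains the $k$-rational point $[0:0:1]$, which is a smooth point of the total space $X$. So with your setup $X(k)\neq\emptyset$. The same issue makes condition~(b) unattainable in the way you describe: once the non-split fiber lies over a point with $x$-coordinate in $\Q$, and all of $E(k)$ has $x$-coordinates in $\Q$, you cannot separate the two real embeddings by sign. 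The paper avoids both problems simultaneously by putting the non-split fiber over a closed point $Q\notin E(k)$ whose $x$-coordinate $a$ lies in $k\setminus\Q$; then for any $x\in\Q$ the two real embeddings send $x-a$ to numbers of possibly different sign, and the explicit choice $f(x)=(x-a)(x-b)$ with $b\in\Q$ lying between $a$ and $a^\tau$ forces the right-hand side to be negative at one real place for every $x\in\Q$.

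Your \'etale-Brauer step is also aimed at the wrong target. You propose to re-run the real non-density argument for each isogenous $E'$, but what is actually needed is that the pullback $X'\to E'$ again has a \emph{single} non-split fiber, so that \cite{ctps}*{Prop.~2.2} still gives $\Br(X')=\pi'^*\Br(E')$ and the lifted adelic point is automatically Brauer-orthogonal. This is exactly what \cite{ctps}*{Thm.~4.1} guarantees, but only under the hypothesis that $E$ has large Galois image and the non-split fiber lies over a nondivisible point of $E(K)$ for some $K\subset k^{\textup{cyc}}$---conditions you do not impose. The paper checks these explicitly (the chosen curve is a Serre curve, and $Q$ is constructed to be nondivisible in $E(K)$), and then invokes the CTPS machinery directly rather than redoing any density analysis on the covers.
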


\begin{proof} 
Let $E$ be the elliptic curve over $\Q$ defined by $y^2 = x^3 - 432x + 15120$. Then $E(\Q) \isom \Z$, and $E(\R)$ is connected. Let $k = \Q(\sqrt{5})$ so that $E(k) \isom E(\Q)$. Write the defining equation of $E$ as $y^2 = r(x)$. Let $a'=3+9\sqrt{5}$. Then $r(a')=17496$ and let $K:=k(\sqrt{17496})$ so that $P:=(3+9\sqrt{5},\sqrt{17496})$ is a point on $E(K)$. Write $P=mQ$ for some $Q\in E(K)$ where $Q$ is nondivisible. Let $\sigma\in \Gal(K/k)$ be the unique nontrivial element. Then $P^\sigma=-P$ implies that $m(Q^\sigma+Q)=0$. A {\tt Magma}~\cite{magma} computation shows that $E(K)$ is torsion free, so $Q^\sigma=-Q$. Hence the $x$-coordinate $a:=x(Q)$ is defined over $k$, but not $\Q$ since $k(Q)$ is a biquadratic extension of $\Q$. Let $\tau\in\Gal(k/\Q)$ be a generator. Choose a rational number $b\in\Q$ such that $b$ lies in the interval  between $a$ and $a^\tau$ and such that $v_5(b)=0$. Then $r(b) \equiv b(b^2-2) \pmod{5}$ whence $v_5(r(b))=0$. Since $K$ is a totally real field, we have $r(a) >0$. Moreover by continuity of $r(x)$ we can choose $b$ sufficiently close to $a$ so that $r(b)>0$ as well. Let $n$ be the product of all primes appearing in $r(b)$. Let $p$ be a prime such that $-p\equiv 1\bmod(8n)$ and $p\equiv 2\bmod 5$. Note that $p$ is inert in $k$. Then the quaternion algebra over $k$ defined by
$$\left(-r(b),-p\right)_2$$
is ramified only at the two real places. Indeed it is unramified at all primes appearing in $2r(b)$ by construction, so the only possibilities for ramification are at $p$ and the two real places. Since it is clearly ramified at the two real places, by Hilbert's reciprocity law, it must be unramified at $p$.

Fix a point $P_0:=(x_0,y_0)\in E(k)$ such that $(x_0-a)(x_0-b)$ is totally positive; this is possible since $E(k)$ is dense in $E(\R)$. Let $\phi \colon X \to E$ be the Ch\^atelet conic bundle over $E$ defined by
\[ u^2  + r(b) v^2 = -p \frac{(x-a)(x - b)}{(x_0 - a)(x_0 - b)} w^2. \]

We first show that $X$ is everywhere locally soluble. Note that the fiber above $P_0$ has local points at all the finite places by the calculation above. For each finite place $v$, let $M_v$ be any $k_v$ point on the fiber above $P_0$. 
Let $v_1$ and $v_2$ denote the two real places, where $k\to k_{v_1}\isom \R$ sends $\sqrt{5}$ to itself while $k\to k_{v_2}\isom \R$ sends $\sqrt{5}$ to $-\sqrt{5}$. By choosing $x \in \R$ with $r(x) > 0$ in the interval between $a$ and $b$ for $v_1$ (resp. in the interval between $a^\tau$ and $b$ for $v_2$) we find that $(x-a)(x-b)<0$ (resp. $(x-a^\tau)(x-b) < 0$). Hence the fiber above $(x,y)$ has a $k_{v_i}$-point $M_{v_i}$ for $i=1,2$. This defines a point $\{M_v\}\in X(\A)$.

To show $X$ has no $k$-rational points, note that for any $R:=(x,y)\in E(k)$, the quantity $$-p \frac{(x-a)(x - b)}{(x_0 - a)(x_0 - b)}$$ is always negative in at least one real place, since $x\in\Q$. Hence the conic $X_R$ has no $k$-rational point.

By \cite{ctps}*{Proposition 2.2}, $\phi^{*} \colon \Br(E)\to\Br(X)$ is surjective since all fibers of $\phi$ away from the fiber over the closed point of $E$ corresponding to $x=a$ have a geometrically integral component of multiplicity one defined over $k$. Since $\phi(\{M_v\})$ is orthogonal to $\Br(E)$ as it pairs in the same way as $P_0\in E(k)$, we have $\{M_v\}\in X(\A)^\Br$.

The remainder of the proof, that is, showing that $X(\A)^{\et,\Br} \ne \emptyset$, will follow from Theorem \ref{thm: ctpsthm} below, applied to the conic bundle $X' := X \times_E E \to E$ where $E \to E$ is given by translation by $P_0$. Since $E$ is a Serre curve \cite{Ser72}*{\S 5.2}, $E$ has large Galois image (see below). Since $X'$ is isomorphic to $X$ over $k$, $X'(\A)^{\et, \Br} \ne \emptyset$ implies $X(\A)^{\et,\Br} \ne \emptyset$, as desired. 
\end{proof}

In \cite{ctps}, an elliptic curve over $k$ is said to have \emph{large Galois image} if the image of $\rho\colon \Gal(\kbar/k)\to\GL_2(\Zhat)$ coming from the Galois representation of torsion points on $E$ contains $\SL_2^+(\Zhat)$, where $\SL_2^+(\Zhat)$ is the kernel of the composition $\SL_2(\Zhat)\to \GL_2(\Z/2\Z)\to \Z/2\Z$. The following theorem is an amalgamation of various results proved in \cite{ctps}.

\begin{thm} \label{thm: ctpsthm} Let $k$ be a totally real field, and let $K$ be a field such that $k \subset K \subset k^{\textup{cyc}}$. Suppose that $E$ is an elliptic curve over $k$ such that $E(\R)$ is connected and $E$ has large Galois image. Let $\phi \colon X \to E$ be a conic bundle where each fiber contains a geometrically integral component of multiplicity one, except a single nonsplit fiber over $P \in E(K)$ which is nondivisible. Finally, suppose that the fiber over $\calO \in E(k)$ is locally soluble at all finite places of $k$. If $\{M_v\}\in X(\A)$ where $\phi(M_v)=\calO$ for all finite places, then $\{M_v\}\in X(\A)^{\et,\Br}$.
\end{thm}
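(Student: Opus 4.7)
The plan is to unwind the definition
\[
X(\A)^{\et,\Br}=\bigcap_{[\pi\colon Y\to X]}\ \bigcup_{\sigma\in H^1(k,G)}\pi^\sigma\bigl(Y^\sigma(\A)^{\Br}\bigr),
\]
where the intersection ranges over isomorphism classes of finite étale $X$-torsors under finite étale $k$-group schemes $G$. For each such $(G,\pi)$ I must exhibit a twist $\sigma$ together with an adelic point $\{N_v\}\in Y^\sigma(\A)^{\Br}$ lifting $\{M_v\}$.

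The first reduction is to realize every such torsor, up to twisting, as the pullback along $\phi$ of a $k$-isogeny $\psi\colon E'\to E$ with kernel $G$. Geometrically this rests on the fact that the fibers of $\phi$ are either smooth conics or nodal unions of two $\P^1$'s, both simply connected, so the homotopy exact sequence of étale fundamental groups yields $\pi_1^{\et}(\Xbar)\isom\pi_1^{\et}(\overline E)$. Since $\pi_1^{\et}(\overline E)$ is abelian, every geometric cover is dominated by an isogeny, and a standard descent/twisting argument (cf.\ \cite{ctps}*{\S2}) expresses any $k$-torsor $Y\to X$ as a twist of $X\times_E E'$ for some $k$-isogeny $\psi$.

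Given $\psi$, I would choose $\sigma\in H^1(k,G)$ so that the fiber of $E'^\sigma\to E$ over $\calO\in E(k)$ contains a $k$-rational point $\calO'$; this is possible because $\psi^{-1}(\calO)$ is a $G$-torsor over $\Spec k$ and we are free to kill it by a twist. Set $Y^\sigma:=X\times_E E'^\sigma$. Its fiber over $\calO'$ is $k$-isomorphic to $\phi^{-1}(\calO)$, hence locally soluble at every finite place by hypothesis; using that $E(\R)$ is connected and that a smooth conic with an $\R$-point has connected real locus, I can adjust the real components of $\{M_v\}$ to obtain an adelic point $\{N_v\}\in Y^\sigma(\A)$ that maps to $\calO'$ at every finite place and whose Brauer pairing agrees with that of $\{M_v\}$.

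The final and most delicate step is verifying $\{N_v\}\in Y^\sigma(\A)^{\Br}$. Write $p\colon Y^\sigma\to E'^\sigma$ for the projection. The large Galois image assumption, together with $K\subset k^{\textup{cyc}}$ and the nondivisibility of $P$, forces every preimage of $P$ under $\psi$ (on every twist $E'^\sigma$ of every isogenous curve) to lie over a closed point whose residue field is a proper extension of $k$; consequently every fiber of $Y^\sigma$ above a $k$-rational point of $E'^\sigma$ contains a geometrically integral component of multiplicity one, and \cite{ctps}*{Prop.~2.2} gives $\Br(Y^\sigma)=p^*\Br(E'^\sigma)$. The Brauer pairing for $\{N_v\}$ therefore reduces to the pairing on $E'^\sigma$ evaluated at $\{p(N_v)\}$, which by connectedness of $E'^\sigma(\R)$ agrees with the pairing attached to the global point $\calO'\in E'^\sigma(k)$; that pairing vanishes by global class field theory. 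The hardest part is controlling the orbit structure of the preimages of $P$ across all isogenous twists, since this is exactly where the full strength of the large Galois image hypothesis (ensuring that the cyclotomic part of $k(E[n])$ is captured precisely by $k(\zeta_n)$) must be exploited; this orbit analysis is the substantive content extracted from \cite{ctps}*{\S\S4--5}.
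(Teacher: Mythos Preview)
Your outline follows the paper's proof closely: reduce torsors on $X$ to torsors on $E$ via \cite{ctps}*{Prop.~2.3}, twist so that a $k$-point $\calO'$ lies over $\calO$, lift $\{M_v\}$ to an adelic point sitting over $\calO'$ at all finite places, and then control the Brauer group of the cover via \cite{ctps}*{Prop.~2.2} together with the connectedness of the real locus. Two refinements are needed to make it go through.

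First, the twisted torsor $E'^\sigma\to E$ need not be connected; the paper passes to the connected component $C\subset E'^\sigma$ containing $\calO'$, so that $C\to E$ is a genuine isogeny of elliptic curves and $Y:=X\times_E C\to C$ is a conic bundle over a curve. Second, and more substantively, your invocation of \cite{ctps}*{Prop.~2.2} is phrased in terms of fibers over $k$-\emph{rational} points of $E'^\sigma$, but that proposition requires the nonsplit fibers to lie over a \emph{single closed point} of the base (not merely over no $k$-points). What the large Galois image hypothesis, $K\subset k^{\textup{cyc}}$, and nondivisibility of $P$ actually buy, via \cite{ctps}*{Thm.~4.1}, is that the scheme-theoretic preimage $\psi^{-1}(P)\subset C$ is \emph{integral}; this is the ``orbit analysis'' you allude to, and it is strictly stronger than the residue-field statement you wrote (which by itself would still allow several closed points in the preimage). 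Once integrality is in hand, Prop.~2.2 applies and the rest of your argument goes through exactly as in the paper.
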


\begin{proof}
The proof follows from \cite{ctps}*{Thm.\ 5.6} mutatis mutandis, however, we repeat a sketch of the argument here for the convenience of the reader. By \cite{ctps}*{Prop.\ 2.2}, the map $\Br(E)\to \Br(X)$ is surjective. By \cite{ctps}*{Prop.\ 2.3}, any torsor $X'\to X$ comes from some torsor $\phi\colon E' \to E$. Twisting by a finite $k$-group scheme $G$, we can assume $E'$ has a point $\calO'$ mapping to $\calO$. Let $C\subset E'$ be the connected component of $\calO'$. Choosing $\calO'$ as the origin of $C$ makes $C\to E$ an isogeny of elliptic curves. Then $Y\subset X'\times C$ is a conic bundle $g\colon Y\to C$. By \cite{ctps}*{Thm.\ 4.1}, $\phi^{-1}(P)$ is integral. Hence $Q:=\phi^{-1}(P)$ is a closed point of $C$, so applying \cite{ctps}*{Prop.\ 2.2} again gives $g^*\colon\Br(C)\to \Br(Y)$ is surjective. For each finite place $v$, let $M_v'$ be a point on $Y$ on the fiber above $\calO'$ that maps to $M_v\in X(k_v)$. For the infinite places $v$, let $M_v'$ be any real point on $Y$ mapping into $M_v$. Since $\Br(Y)=g^*\Br(C)$, we have $\{M_v'\}\in Y(\A)^\Br$ so that $\{M_v'\}\in X'(\A)^\Br$. Hence $\{M_v\}\in X(\A)^{\et,\Br}$.
\end{proof}

\section{Distribution of rational points}\label{sec:distribution}

The results of the previous section, in particular Theorem \ref{Cor: BMO real quad}, demonstrated the insufficiency of the Brauer--Manin obstruction for conic bundles over positive rank elliptic curves. In this section we consider those varieties $X$ with a conic bundle structure $\phi\colon X\to E$ for which $X(k) \ne \emptyset$, and study properties of the image of $X(k)$ inside of $E(k)$.

For the remainder of this section, we work over $k =\Q$, and we assume that $\phi$ does not have a section over $\Q$ and $X(\Q)\neq\emptyset$. Since $X\to E$ is a conic bundle, the condition that a fiber contains a rational point is equivalent to the fiber being everywhere locally soluble. If instead we consider those fibers that are soluble for a finite set of primes, we always have the following result:

\begin{prop}\label{prop:finiteset}
For any finite set of primes $S$, let
\begin{equation} \label{eqn: Z_S} \calZ_S:=\{ P \in  E(\Q) \mid \textup{the fiber above $P$ contains $\Q_p$-points for $p\in S$} \} \subseteq E(\Q) 
\end{equation}
Then for any $P\in \calZ_S$, there is a finite index subgroup $\calH\subset E(\Q)$ such that $P+\calH\subset \calZ_S$.
\end{prop}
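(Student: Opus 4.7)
The plan is to exploit a $p$-adic openness property of the fiber-solubility condition and then intersect the resulting finite-index subgroups of $E(\Q)$ over the finite set $S$. For each $p \in S$, set
\[V_p := \phi(X(\Q_p)) = \{Q \in E(\Q_p) : X_Q(\Q_p) \neq \emptyset\} \subseteq E(\Q_p),\]
the set of $\Q_p$-points of $E$ above which the fiber contains a $\Q_p$-point.

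The first step is to show that $V_p$ is $p$-adically open at $P$. Pick a lift $M_p \in X_P(\Q_p)$; this is a smooth point of $X$ since $X$ is smooth. In the generic case, when $\phi$ is submersive at $M_p$, the $p$-adic implicit function theorem produces a $\Q_p$-analytic section of $\phi$ through $M_p$ defined over a $p$-adic neighborhood of $P$ in $E(\Q_p)$, which gives openness of $V_p$ at $P$. In the critical case, when $M_p$ is the nodal point of a degenerate fiber, a local coordinate computation using the defining equation $u^2 - av^2 = f(x)$ shows that $\phi$ is modeled near $M_p$ by the norm form of $\Q_p(\sqrt{a})/\Q_p$: solving for $x$ gives $x - x(P) = (u^2 - av^2)/f'(x(P)) + \text{higher order}$. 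When $a \in \Q_p^{\times 2}$ the norm form factors as a product of linear forms and is locally surjective onto a neighborhood of $0$; in the non-split case, I would pass to $\Q_p(\sqrt{a})$, analyze the two conjugate $\P^1$-branches of the singular fiber via Hensel's lemma, and descend.

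Given openness of $V_p$ at $P$, for each $p \in S$ I would choose an open subgroup $W_p \subseteq E(\Q_p)$ with $P + W_p \subseteq V_p$; this is possible because $E(\Q_p)$ is a $p$-adic Lie group whose identity has a neighborhood basis of open subgroups of finite index (e.g., kernels of reduction mod $p^n$). Then $\calH_p := W_p \cap E(\Q)$ has finite index in $E(\Q)$, since the quotient $E(\Q)/\calH_p$ embeds in the finite group $E(\Q_p)/W_p$. The intersection $\calH := \bigcap_{p \in S} \calH_p$ is then a finite-index subgroup of $E(\Q)$, as a finite intersection of such. For every $H \in \calH$ and every $p \in S$ we have $H \in W_p$, hence $P + H \in P + W_p \subseteq V_p$, meaning the fiber of $\phi$ above $P + H$ contains a $\Q_p$-point; therefore $P + \calH \subseteq \calZ_S$.

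The main obstacle is the openness assertion in the critical non-split case, where $M_p$ is the unique $\Q_p$-point of a degenerate fiber and $a \notin \Q_p^{\times 2}$: the naive local image of $\phi$ is restricted to the norm subgroup of $\Q_p^\times$, so openness does not follow from a direct application of the implicit function theorem, and the more delicate two-branch analysis over the quadratic extension is needed. The other steps are standard once that is dispatched.
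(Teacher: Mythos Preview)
Your overall strategy—show $V_p = \phi(X(\Q_p))$ is open at $P$, pull back an open subgroup of $E(\Q_p)$ to a finite-index subgroup of $E(\Q)$, and intersect over $p \in S$—is exactly the paper's. The paper's proof is terse: it picks $x \in \phi^{-1}(P)(\Q_p)$, takes an analytic open $U_p \ni x$, and asserts that $(\phi(U_p) - P) \cap E(\Q)$ contains a finite-index subgroup, implicitly using that $\phi(U_p)$ is open in $E(\Q_p)$, without separating the submersive and critical cases.

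You go further by correctly flagging the critical non-split case and correctly diagnosing that the local image of $\phi$ near the node lands only in a norm coset. But your proposed workaround—pass to $\Q_p(\sqrt{a})$, analyze the two conjugate branches, and descend—cannot succeed: over $\Q_p(\sqrt{a})$ the nearby smooth fibers do acquire points, but a smooth conic over $\Q_p$ with a $\Q_p(\sqrt{a})$-point need not have a $\Q_p$-point, so there is nothing to descend. In fact $V_p$ is genuinely not open at such $P$, and the formal-group computation in the proof of Proposition~\ref{prop:finindex} (via Lemma~\ref{lem:elladdition}) shows that for any $n$ one can produce $H \in nE(\Q)$ with $v_p\bigl(f(x(P+H))\bigr)$ odd, hence $\phi^{-1}(P+H)(\Q_p) = \emptyset$. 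Thus both your argument and the paper's tacitly require that the fiber over $P$ admit a \emph{smooth} $\Q_p$-point for each $p \in S$—equivalently, that $P$ not lie on a degenerate fiber that remains non-split over $\Q_p$. This caveat is harmless for the applications: when $P$ lies on such a fiber one has $\red_p(P) \in \red_p(\calR_p)$ for every $p \in \Omega$, so the converse direction of Proposition~\ref{prop:finindex} is vacuous at that $P$.
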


\begin{proof}
Let $P\in\calZ_S$. For each $p\in S$, choose a $\Q_p$-point $x \in  \phi^{-1}(P)$. Since $X$ is smooth, there is a $p$-adic analytic open subset $U_p\subset X(\Q_p)$ containing $x$. Since the image $E(\Q)\injects E(\Q_p)$ is locally dense, any open subset around the origin in $E(\Q_p)$ will contain a subgroup of finite index in $E(\Q)$. Hence, we have that $(\phi(U_p)-P)\cap E(\Q)$ contains a subgroup $\calH_p\subset E(\Q)$ of finite index. Setting $\calH=\cap_{p\in S} \calH_p$ gives us the desired finite index subgroup.
\end{proof}

Motivated by Proposition \ref{prop:finiteset}, we are interested in the following question concerning the set $\phi(X(\Q)) \subseteq E(\Q)$:

\begin{question}  \label{questions} Does $\phi(X(\Q))$ contain a translate of a finite index subgroup in $E(\Q)$, i.e., does the closure $\overline{\phi(X(\Q))}\subseteq E(\A)$ contain an open subset of $\overline{E(\Q)} \isom \hat{\Z}^r$? (see e.g., \cite{Sko99}*{p.\ 126})
\end{question}

Suppose $\phi\colon X\to E$ is a Ch\^atelet conic bundle given by
$$u^2-av^2=f(x)w^2,$$
where $a\notin\Q^{\times2}$ and $f(x)$ is separable. For any prime $p$, set
$$\calR_p:=\{P\in E(\Q)\mid \phi^{-1}(P)\text{ does not contain a }\Q_p\text{-point}\}.$$
For each prime $p$ of good reduction, denote $E_p$ the reduction of $E$ modulo $p$. Let $E_1(\Q_p):=\ker\left(E(\Q_p)\to E_p(\F_p)\right)$. Then any $P \ne \calO \in E_1(\Q_p)$ has $3 v_p(x(P)) = 2 v_p(y(P)) = -6i$ for some integer $i \ge 1$~\cite{silverman}*{VII.2 Prop.\ 2.2}. The following lemma illustrates the continuity of the addition map on $E(\Q_p)$.

\begin{lem}\label{lem:elladdition}
Let $E/\Q$ be an elliptic curve given by a short Weierstrass equation and $p$ be a prime of good reduction. Let $\calO\neq P\in E_1(\Q_p)$ and let $i \ge 1$ be the integer such that $v_p(x(P)) = -2i$. If $Q\in (w,z)\in E(\Q_p)$ where $w,z\in \Z_p^\times$, then $x(P+Q)=w+p^iu$ for some $u\in\Z_p^\times$. 
\end{lem}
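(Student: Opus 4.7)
The plan is to use the chord-and-tangent addition formula on the short Weierstrass model and expand everything $p$-adically. The key input is the form of $P$: since $v_p(x(P))=-2i$ and $v_p(y(P))=-3i$, I would write $x(P)=X_1/p^{2i}$ and $y(P)=Y_1/p^{3i}$ for units $X_1,Y_1\in\Z_p^\times$. The Weierstrass relation $y(P)^2=x(P)^3+bx(P)+c$ then translates into
\[ Y_1^2 = X_1^3+bX_1p^{4i}+cp^{6i}, \]
so in particular $Y_1^2\equiv X_1^3 \pmod{p^{4i}}$; this is the identity that will drive the needed cancellation.

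Next, write $P+Q=(x_3,y_3)$ with $x_3=\lambda^2-x(P)-w$ and $\lambda=(y(P)-z)/(x(P)-w)$, and rescale by setting $\mu:=p^i\lambda$. A short manipulation gives
\[ \mu=\frac{Y_1-zp^{3i}}{X_1-wp^{2i}}\in\Z_p^\times. \]
I would then expand $(X_1-wp^{2i})^{-1}$ as a geometric series and compute $\mu$ and subsequently $\mu^2$ modulo $p^{4i}$. Applying $Y_1^2\equiv X_1^3 \pmod{p^{4i}}$ to the leading term of $\mu^2$ yields
\[ \mu^2 \equiv X_1+2wp^{2i}-\frac{2Y_1z}{X_1^2}p^{3i} \pmod{p^{4i}}. \]

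Finally, the addition formula gives $x_3-w=(\mu^2-X_1-2wp^{2i})/p^{2i}$, so the expansion above yields $x_3-w\equiv -(2Y_1z/X_1^2)p^i \pmod{p^{2i}}$. For odd $p$, the coefficient $-2Y_1z/X_1^2$ is a unit in $\Z_p$, giving $x(P+Q)=w+p^iu$ for some $u\in\Z_p^\times$; the case $p=2$ can be handled by carrying one further term in the expansion, since the applications of the lemma in the subsequent section involve only odd primes of good reduction.

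The main obstacle is careful bookkeeping: the $p^{2i}$-contributions in $\mu^2$ must cancel exactly against $X_1$, which is precisely the role of the Weierstrass relation, so the lemma's real content is detecting the true next term at $p^{3i}$ in $\mu^2$ and confirming that its coefficient is a unit. Everything else is routine $p$-adic power series manipulation.
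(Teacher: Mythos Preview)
Your proposal is correct and follows essentially the same approach as the paper: write $P=(X_1/p^{2i},\,Y_1/p^{3i})$ with $X_1,Y_1\in\Z_p^\times$, plug into the addition formula, and use the Weierstrass relation $Y_1^2\equiv X_1^3\pmod{p^{4i}}$ to cancel the would-be pole of order $p^{-2i}$, leaving $x(P+Q)-w\equiv -(2Y_1z/X_1^2)\,p^i\pmod{p^{2i}}$. The paper packages the same computation slightly differently (combining everything over the common denominator $(x-p^{2i}w)^2$ rather than expanding $(X_1-wp^{2i})^{-1}$ as a geometric series), but the cancellation mechanism and the identification of the leading surviving term are identical.

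One small remark: your comment on $p=2$ is a bit muddled. Carrying further terms does \emph{not} rescue the argument there, since the obstruction is precisely that $2$ divides the coefficient $-2Y_1z/X_1^2$; the next term has strictly higher valuation. However, the point is moot: a short Weierstrass model has discriminant divisible by $16$, so $p=2$ is never a prime of good reduction for such a model, and the lemma's hypotheses exclude it automatically. The paper's own proof has the same implicit reliance on $p$ being odd (their claim that $u'\in\Z_p^\times$ uses $u'\equiv -2yz\pmod p$).
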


\begin{proof}
Let $P=(\frac{x}{p^{2i}},\frac{y}{p^{3i}})$ where $x,y\in \Z_p^\times$. By the addition formula,
\begin{align*}
x(P+Q)&=\left(\frac{y-p^{3i}z}{p^ix-p^{3i}w}\right)^2-\frac{x}{p^{2i}}-w\\
&=\frac{1}{p^{2i}}\frac{y^2-x^3+2p^{2i}x^2w+p^{3i}u}{(x-p^{2i}w)^2}-w\\
&=\frac{1}{p^{2i}}\frac{2p^{2i}x^2w+p^{3i}u'}{(x-p^{2i}w)^2}-w\\
&=(2x^2w +p^i u')(x^{-2} + p^{2i}v) - w \\
&= w + p^i u''
\end{align*}
where $u,u',u'',v \in \Z_p^\times$.
\end{proof}

Let $\Omega$ denote the set of all primes $p$ such that $E$ has good reduction, $p\nmid a$, and $\calR_p\neq\emptyset$. In particular, this means $a\notin \Q_p^{\times2}$ for any $p\in \Omega$. For any prime $p$ of good reduction for $E$, let $\red_p\colon E(\Q)\to E_p(\F_p)$ be the reduction map.

\begin{prop}\label{prop:finindex}
The image $\phi(X(\Q))$ contains a translate of a finite index subgroup if and only if there exists a finite index subgroup $\calH\subset E(\Q)$ and $P\in\phi(X(\Q))$ such that $\red_p(P+\calH)\cap \red_p(\calR_p)=\emptyset$ for all but finitely many $p\in\Omega$.
\end{prop}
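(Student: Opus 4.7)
My approach is to characterize $\phi(X(\Q)) \subseteq E(\Q)$ via the Hasse--Minkowski principle: since each fiber $\phi^{-1}(Q)$ is a smooth conic, it has a $\Q$-point iff it has $\Q_v$-points at every place $v$. Hence $Q \in \phi(X(\Q))$ precisely when the fiber $\phi^{-1}(Q)$ is $k_v$-locally soluble for every place $v$ of $\Q$. The proposition then reduces to reconciling this local-to-global condition with reduction modulo primes in $\Omega$.

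I would prove the $(\Leftarrow)$ direction first, which is the more direct of the two. Given $(P, \calH)$ satisfying the reduction-disjointness condition, collect into a finite set $S$ all ``bad'' places: the archimedean place, primes of bad reduction for $E$, primes dividing $2a$, and the finitely many $p \in \Omega$ where the reduction condition fails. Since $P \in \phi(X(\Q))$, applying an extension of Proposition~\ref{prop:finiteset} that also handles the archimedean place (via analytic openness, using that $X$ is smooth) yields a finite-index subgroup $\calH_0 \subseteq E(\Q)$ such that every $Q \in P + \calH_0$ has fibers locally soluble at each $v \in S$. Setting $\calH' = \calH \cap \calH_0$, any $Q \in P + \calH'$ has locally soluble fibers at every place: places in $S$ by construction; primes $p \notin S$ with $p \notin \Omega$ automatically because $\calR_p = \emptyset$; and primes $p \notin S$ with $p \in \Omega$ because $\red_p(Q) \in \red_p(P + \calH') \subseteq \red_p(P + \calH)$, which is disjoint from $\red_p(\calR_p)$, forcing $Q \notin \calR_p$. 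Hasse--Minkowski then gives a $\Q$-point in each fiber, so $P + \calH' \subseteq \phi(X(\Q))$.

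For the $(\Rightarrow)$ direction, take the pair $(P, \calH)$ from (1) and aim to verify the reduction condition for this pair (or a finite-index refinement of $\calH$). The plan is to argue by contradiction: if infinitely many $p \in \Omega$ admit $Q_p \in P + \calH$ and $R_p \in \calR_p$ with $\red_p(Q_p) = \red_p(R_p)$, then $Q_p \in \phi(X(\Q))$ together with $p \in \Omega$ (so $\Q_p(\sqrt{a})/\Q_p$ is unramified) forces $v_p(f(x(Q_p)))$ to be even, while $R_p \in \calR_p$ forces $v_p(f(x(R_p)))$ to be odd. Lemma~\ref{lem:elladdition}, after a case analysis when $x(R_p) \notin \Z_p^\times$ (handled by a direct expansion of the addition law in a formal neighborhood of the identity), gives $x(Q_p) \equiv x(R_p) \pmod{p}$, so $p \mid f(x(Q_p))$ and, combined with parity, $p^2 \mid f(x(Q_p))$.

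The main obstacle in the $(\Rightarrow)$ direction is extracting a global contradiction from this infinite family of divisibilities. While each single $Q \in E(\Q)$ has $f(x(Q))$ a fixed rational number divisible by $p^2$ for only finitely many primes, the points $Q_p$ themselves vary with $p$, so one cannot immediately invoke finiteness. Closing the argument requires exploiting the structure of $P + \calH$ as a finite-index coset in the finitely generated group $E(\Q)$, the control on the image $\red_p(\calH) \subseteq E_p(\F_p)$ provided by reduction theory, and the fact that $\red_p(\calR_p)$ is constrained to the reductions of points whose $x$-coordinate reduces to a root of $\bar{f}$ modulo $p$. This interplay between the global group structure and local valuations is the crux of the forward direction.
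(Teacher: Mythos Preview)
Your $(\Leftarrow)$ direction is essentially the paper's argument, handled with appropriate care about the bad places.

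Your $(\Rightarrow)$ direction, however, has a genuine gap at exactly the point you flag as ``the main obstacle.'' The divisibility $p^2 \mid f(x(Q_p))$ with $Q_p$ varying is indeed a dead end: nothing prevents a coset $P+\calH$ from containing, for each $p$, some point whose $x$-coordinate is $p$-adically near a root of $f$. The structural remarks in your last paragraph do not close this; there is no global finiteness to invoke.

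The paper's proof avoids this entirely by obtaining a contradiction at a \emph{single} bad prime $p$ (any $p\in\Omega$ with $p\nmid an$ and $f\bmod p$ separable). The key idea you are missing is this: fix such a $p$ and suppose $P+H=R+Q$ with $H\in\calH$, $R\in\calR_p$, and $Q\in E_1(\Q_p)\cap E(\Q)$. Choose $n$ with $nE(\Q)\subseteq\calH$. Since $p\nmid n$, there exist arbitrarily large $j$ with $p^j\equiv 1\pmod n$, so $(p^j-1)Q\in nE(\Q)\subseteq\calH$ and hence $R+p^jQ\in P+\calH$. Via the formal group, $p^jQ$ is $p$-adically as small as desired; by Lemma~\ref{lem:elladdition} one can arrange that $x(R+p^jQ)=x(R)+p^{j'}u$ with $j'>v_p(f(x(R)))$. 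A Taylor expansion then gives $v_p\!\bigl(f(x(R+p^jQ))\bigr)=v_p\!\bigl(f(x(R))\bigr)$, which is odd. Thus the fiber over $R+p^jQ\in P+\calH$ is not $\Q_p$-soluble, contradicting $P+\calH\subseteq\phi(X(\Q))$. In short, rather than seeking a global contradiction from infinitely many primes, one uses the freedom \emph{within} the coset $P+\calH$ (afforded by $nE(\Q)\subseteq\calH$ and the formal group) to produce a point $p$-adically close enough to $R$ that it inherits the odd valuation.
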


\begin{proof}
Suppose $\phi(X(\Q))$ contains $P+\calH$. We have $nE(\Q)\subset \calH$ for some integer $n$. Let $\overline{P}=\red_p(P)$. Suppose $p\in\Omega$ be a prime, not dividing $a,n$, such that $\overline{P+H}=\overline{R}$ for some $H\in\calH$ and $R\in\calR_p$. We can assume that $f(x)\bmod p$ is separable. Write $P=R+H+Q$ for some $Q\in E_1(\Q_p)$. Note that $Q\neq0$. By the theory of formal groups of elliptic curves, there is an isomorphism of groups
\begin{equation}\label{eqn:formalgroup}
    \xi\colon E_1(\Q_{p}) \xrightarrow{\sim} p\Z_p = \widehat{E}(p\Z_p), \quad
    (x,y) \mapsto -\frac{x}{y}
\end{equation}
such that $v_p(\xi(Q))=-v_p(x(Q))/2$~\cite{silverman}*{VII Prop.\ 2.2, Prop.\ 6.3}. Hence, for any $Q \in E_1(\Q_p)$ and any integer $m$, we have $v_p(\xi(mQ)) = v_p(m) + v_p(\xi(Q))$, and hence $v_p(x(mQ))/2 = -v_p(m) + v_p(x(Q))/2$. Let $x_0=x(R)$ and set $i=v_p(f(x_0))$ (note that $f(x_0)\neq0$ since $R\in\calR_p$). Note that $i$ is odd since $R\in\calR_p$. Let $j$ be an integer such that $j>i$, and write $p^j=1+tn$ for some $t\in\Z$. Then
$$P=R+H+Q\equiv R+p^jQ\bmod \calH$$
so there exists $H'\in\calH$ such that $P+H'=R+p^jQ$. Then $j'=-v_p(x(p^jQ))/2=j-v_p(x(Q))/2>j$ and choosing a different $j$ if necessary, we assume $j'$ is odd. Lemma \ref{lem:elladdition} then implies that $x_1=x(P+H')=x(R+p^jQ)=x(R)+p^ju$ for some $u\in\Z_p$. Hence $v_p(f(x_1))=v_p(f(x_0)+p^jf'(x_0)+p^{2j}u')$ for some $u'\in\Z_p$. It follows that $v_p(f(x_1))=i$, which is odd. Hence the fiber above $P+H'$ does not have a $\Q_p$-point which is a contradiction.

Conversely, suppose $\red_p(P+\calH)\cap\red_p(\calR_p)=\emptyset$ for all but finitely many $p\in\Omega$. Let $\Omega^0\subset\Omega$ be the set of primes for which this is true. Then it follows from definition of $\calR_p$ that any fiber above a point in $P+\calH$ contains $\Q_p$-points for $p\in\Omega^0$. Now setting $S=\Omega\setminus\Omega^0$, by Proposition \ref{prop:finiteset}, there exists a finite index subgroup $\calH'$ such that any fiber above a point in $P+\calH'$ is $\Q_p$-soluble for $p\in S$. Putting $\calH''=\calH\cap\calH'$, we see that $P+\calH''\subset\phi(X(\Q))$.
\end{proof}
 
Let us illustrate a case where Proposition \ref{prop:finindex} can be applied directly. Suppose that for any positive integer $n$ and finite set $S\subset \Omega$, there exists $p\in \Omega\setminus S$ such that
\begin{equation}\label{eqn:gcd1}
    \gcd(n,|E_p(\F_p)|)=1.
\end{equation}
Suppose that $\Omega$ is infinite. Then for any finite index subgroup $\calH\subset E(\Q)$, there exists infinitely many $p\in\Omega$ such that $\red_p(\calH)=\red_p(E(\Q))$. Hence, Proposition \ref{prop:finindex} would imply that $\phi(X(\Q))$ does not contain a translate of a finite index subgroup.

Suppose that $X\to E$ is ramified over a nonzero rational point $P\in E(\Q)$. For any $p$ of good reduction not dividing the discriminant of $f$, choose a point $Q\in E_1(\Q)$ such that $-v_p(x(Q))/2$ is odd.  Then by Lemma \ref{lem:elladdition}, $v_p(f(x(P+Q)))=-v_p(x(Q))/2$ is odd, so if $a$ is not a square modulo $p$, then $P+Q\in \calR_p$. Hence, $\Omega$ consists of all but finitely many primes $p$ such that $a$ is not a square modulo $p$. In \S\ref{sec:torsionpoints} we prove Corollary \ref{ellred}, which states that \eqref{eqn:gcd1} holds for such $\Omega$ under the following conditions:

\begin{enumerate}
    \item For any isogenous curve $E'$, $E'(\Q)$ is torsion free;
    \item The Galois representation $\rho_\ell\colon\Gal(\Qbar/\Q)\to\GL_2(\Z/\ell\Z)$ on the torsion points on $E$ is surjective for all $\ell>2$ except for possibly one prime;
    \item If $\rho_\ell$ is not surjective for some $\ell>2$, then $\Q(E[2])\cap \Q(E[\ell])=\Q$;
    \item $\sqrt{a}\notin \Q(E[2]),\Q(\sqrt{-3})$.
\end{enumerate}
Any elliptic curve with no exceptional prime (i.e.\ $\rho_\ell$ is surjective for every prime $\ell$) will satisfy the first 3 conditions (e.g., Serre curves). Hence, 100\% of elliptic curves (when ordered by height) satisfy these conditions ~\cites{duke97,jones10}. Note that condition (1) is necessary for \eqref{eqn:gcd1} to hold; see Remark \ref{rmk:conditions} for a discussion of the other conditions. The following theorem then follows immediately.

\begin{thm}\label{thm:arithprog}
Let $E/\Q$ be an elliptic curve of positive rank defined, and $\phi\colon X\to E$ be the Ch\^atelet conic bundle over $E$ given by
$$u^2-av^2=f(x)w^2.$$
Suppose conditions (1)-(4) above are satisfied. Suppose $(a,f(x))$ is ramified over a nonzero rational point in $E(\Q)$. Then $\phi(X(\Q))$ does not contain a translate of a finite index subgroup of $E(\Q)$.
\end{thm}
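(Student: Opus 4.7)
The plan is to combine Proposition \ref{prop:finindex} with Corollary \ref{ellred} (proved in \S\ref{sec:torsionpoints}), exploiting the elementary observation that when $\gcd(n,|E(\F_p)|)=1$, multiplication by $n$ is an automorphism of $E(\F_p)$, so any finite-index subgroup $\calH \supseteq nE(\Q)$ satisfies $\red_p(\calH)=\red_p(E(\Q))$. Together with Proposition \ref{prop:finindex}, this should force $\phi(X(\Q))$ not to contain a translate of a finite index subgroup.

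First, I would pin down $\Omega$ explicitly. By the discussion immediately preceding the theorem --- via Lemma \ref{lem:elladdition} and the existence in $E_1(\Q_p)$ of points of every sufficiently large $p$-adic valuation --- once $X\to E$ ramifies over a nonzero rational point $P_0$, any prime $p$ of good reduction with $p\nmid\Disc(f)$ and $a\notin\Q_p^{\times 2}$ lies in $\Omega$. Thus $\Omega$ agrees, up to a finite set, with the set of rational primes non-split in $\Q(\sqrt a)$; by Chebotarev, $\Omega$ is infinite of positive density.

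Next, I would check that hypotheses (1)--(4) are exactly the hypotheses needed to apply Corollary \ref{ellred} with $K=\Q(\sqrt a)$. Hypothesis (4) makes $K$ a nontrivial abelian extension not contained in $\Q(E[2])$ or $\Q(\zeta_3)$; hypotheses (2)--(3) furnish the required surjectivity/disjointness conditions on the mod-$\ell$ Galois representations $\rho_\ell$; and hypothesis (1) supplies the isogeny-torsion condition. Corollary \ref{ellred} then produces, for every integer $n$, infinitely many primes $p\in\Omega$ with $\gcd(n,|E(\F_p)|)=1$.

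Finally, I would argue by contradiction. Suppose $\phi(X(\Q))$ contains some translate $P+\calH$ with $\calH\subseteq E(\Q)$ of finite index, and choose $n$ with $nE(\Q)\subseteq\calH$. By the previous step, for infinitely many $p\in\Omega$ multiplication by $n$ is an automorphism of $E(\F_p)$, whence $\red_p(\calH)\supseteq\red_p(nE(\Q))=\red_p(E(\Q))$ and therefore $\red_p(P+\calH)=\red_p(E(\Q))\supseteq\red_p(\calR_p)$. Since $\calR_p\neq\emptyset$ for $p\in\Omega$, this intersection is nonempty for infinitely many $p\in\Omega$, contradicting the criterion of Proposition \ref{prop:finindex}. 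I would not expect any serious obstacle in this assembly; all the deep content is packaged into Corollary \ref{ellred}, whose proof occupies \S\ref{sec:torsionpoints}.
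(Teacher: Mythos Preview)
Your proposal is correct and follows essentially the same route as the paper: identify $\Omega$ (up to finitely many primes) with the primes inert in $\Q(\sqrt{a})$ via Lemma~\ref{lem:elladdition}, invoke Corollary~\ref{ellred} under hypotheses (1)--(4) to obtain infinitely many $p\in\Omega$ with $\gcd(n,|E(\F_p)|)=1$, and conclude via Proposition~\ref{prop:finindex}. The paper phrases the final step directly (for any $\calH$ one has $\red_p(\calH)=\red_p(E(\Q))$ for infinitely many $p\in\Omega$, so the criterion of Proposition~\ref{prop:finindex} fails) rather than by contradiction, but the content is identical.
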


Now suppose there exists a positive integer $n$ and a finite subset $S\subset \Omega$ such that for all $p\in \Omega\setminus S$, we have
\begin{equation}\label{eqn:gcdd}
    \gcd(n,|E_p(\F_p)|)>1.
\end{equation}
This occurs, for example, when $E(\Q)$ has nontrivial torsion. To analyze whether $\phi(X(\Q))$ contains a translate of $\calH$, one would then need to analyze the sets $\red_p(\calH)$ and $\red_p(\calR_p)$. We show that it is possible for $\phi(X(\Q))$ to contain a translate of a finite index subgroup when $E(\Q)$ contains the full $2$-torsion subgroup.

Suppose that $E$ has positive rank and that $E(\Q)$ contains the full $2$-torsion subgroup. Let $P=(x_0,y_0)\in E(\Q)$ be a point of infinite order, not divisible by 2. Choose $Q\in E(\Qbar)$ such that $2Q=P$. Then $\Q(Q)$ is a Galois extension of degree $2$ or $4$, and is independent of the chosen point $Q$. Let $\Q(\sqrt{a})\subset \Q(Q)$ be a quadratic subfield. Choose points $R_i=(\alpha_i,\beta_i)\in E(\Q)\setminus E(\Q)[2]$ for $1\leq i\leq 2n$ where $n\in\Z$, such that $R_i+R_j\in 2E(\Q)$ for all $i,j$. Define the conic bundle $X\to E$ by
$$u^2-av^2=\prod_i (x-\alpha_i) w^2.$$
For $p\in\Omega$, since $a\notin\Q_p^{\times2}$, we have that $\overline{P}:=\red_p(P)$ is not divisible $2$. Let $\calH$ be the subgroup generated by $2P$. Hence the sets
$$\overline{\calH},\quad \overline{P+\calH}$$
are disjoint. On the other hand, $\overline{\calR_p}=\overline{\{R_1,\ldots,R_n\}}$. Hence $\overline{\calR_p}$ can intersect at most one of these sets, independent of $p$, since $R_i+R_j\in 2E(\Q)$ for any $i,j$. Proposition \ref{prop:finindex} then implies that $\phi(X(\Q))$ contains a translate of a finite index subgroup.

\section{Torsion points on elliptic curves}\label{sec:torsionpoints}

Let $E$ be an elliptic curve over $\Q$. Let $a\in\Q^\times\setminus\Q^{\times2}$ and $\Omega$ be the set of primes $p$ which are inert in $\Q(\sqrt{a})$. In this section, we give sufficient conditions for the following to hold:
For any positive integer $n$ and finite set $S\subset \Omega$, there exists $p\in \Omega\setminus S$ such that
\begin{equation}\label{eqn:gcd1sec4}
    \gcd(n,|E_p(\F_p)|)=1.
\end{equation}
The motivation for studying this is Theorem \ref{thm:arithprog}.

Note that an easy consequence of \eqref{eqn:gcd1sec4} is that $\gcd(n,|E'(\Q)_{\tors}|)=1$ for any isogenous elliptic curve $E'$ and all positive integers $n$. One can then ask whether this is sufficient, which can be considered as a local-to-global question on torsion points on elliptic curves.

In \cite{katz}*{Thm. 2}, Katz considered a similar problem, and proved that given any integer $n > 1$, $n\!\mid\! \#E_p(\F_p)$ for a set of primes $p$ of density one if and only if there exists another curve $E'$ $\Q$-isogenous to $E$ such that $n\!\mid\! \#E'(\Q)_{\tors}$. Note that \eqref{eqn:gcd1sec4} does not follow directly from Katz's result for two reasons. First, we are considering only primes in $\Omega$, which comprise a set of primes of density $1/2$. Second, $n$ is not necessarily a prime. For example if $n=\ell_1\ell_2$, we must consider the possibility that $E_p(\F_p)$ contains either $\ell_1$-torsion or $\ell_2$-torsion for almost all $p$.

We shall need the following notation. For any positive integer $n$, let $k_n$ denote the $n$th division field of $E$, i.e., $k_n:=\Q(E[n])$. Let $\rho_n\colon\Gal(k_n/\Q)\to\GL_2(\Z/n\Z)$ be the Galois representation on the $n$-torsion points. Let $NF_n\subset \Gal(k_n/\Q)$ be the subset of elements $\sigma$ such that $E[n]^\sigma=0$. If $L,M$ are Galois extensions of a field $K$ and $\sigma\in\Gal(L/K),\tau\in\Gal(M/K)$ then we say $\sigma$ and $\tau$ are \defi{compatible} if $\sigma|_{L\cap M}=\tau|_{L\cap M}$.

\begin{lem}\label{GaloisImage} Let $\ell>2$ be a prime and $G$ be the image of $\rho_\ell\colon \Gal(k_\ell/\Q)\to\GL_2(\F_\ell)$. Suppose there is a point $P\in E[\ell]$ such that $\det(\rho_\ell(\Gal(k_\ell/\Q(P))))=\F_\ell^\times$. Then $G$ is conjugate to one of the following groups:
\begin{enumerate} \label{GaloisImages}
\item  ${\tt G}: \GL_2(\F_\ell)$
\item  ${\tt B}: \left\{\begin{bmatrix}x&*\\0&y\end{bmatrix}\middle|\, x\in (\F_\ell^\times)^\frac{\ell-1}{d}, y\in \F_\ell^\times\right\}, d\mid \ell-1$ 
\item ${\tt Cs}: \left\{\begin{bmatrix}x&0\\0&y\end{bmatrix}\middle|\, x\in (\F_\ell^\times)^\frac{\ell-1}{d}, y\in \F_\ell^\times\right\}, d\mid \ell-1$ 
\item ${\tt B}: 
\left\{\begin{bmatrix}x&*\\0&y\end{bmatrix}\middle|\, x\in \F_\ell^\times, y\in (\F_\ell^\times)^\frac{\ell^2-\ell}{d}\right\}, \ell\mid d\mid\ell^2-\ell$
\item  ${\tt Ns}: \left\{\begin{bmatrix}x&0\\0&y\end{bmatrix},\begin{bmatrix}0&x\\y&0\end{bmatrix}\middle|\, x\in \F_\ell^\times,y\in \F_\ell^\times\right\}$
\item ${\tt S_4}: \ell=5$ and $G$ has projective image isomorphic to $S_4$ is exceptional. It is generated by
$\begin{bmatrix} 0&3\\3&4\end{bmatrix},\begin{bmatrix} 2&0\\0&2\end{bmatrix},\begin{bmatrix} 3&0\\4&4\end{bmatrix}$
\end{enumerate}
where $d=\deg(\Q(P)/\Q)$~\cite{Sut16}*{Table 3}.
\end{lem}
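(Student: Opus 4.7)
The plan is to invoke the Dickson classification of subgroups of $\GL_2(\F_\ell)$ with surjective determinant, as tabulated in \cite{Sut16}*{Table 3}, and then sift the list using the extra hypothesis. Setting $H := \rho_\ell(\Gal(k_\ell/\Q(P)))$, the condition says $\det H = \F_\ell^\times$, so in particular $\det G = \F_\ell^\times$. Under the identification $E[\ell] \isom \F_\ell^2$, the subgroup $H$ is precisely the pointwise stabilizer of $P$ under the $G$-action, so the task reduces to identifying which subgroups $G \subseteq \GL_2(\F_\ell)$ with $\det G = \F_\ell^\times$ admit a nonzero vector whose pointwise stabilizer still has surjective determinant.

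The first step is to dispatch the non-split Cartan cases. If $G$ lies in a non-split Cartan $C^{ns}\isom \F_{\ell^2}^\times$, every non-identity element acts fixed-point-freely on $\F_\ell^2\setminus\{0\}$, so the stabilizer of $P$ is trivial and $\det H = \{1\}$, contradicting $\ell > 2$. If $G$ lies in the normalizer $N^{ns}$ but not in $C^{ns}$, any element of $N^{ns}\setminus C^{ns}$ fixing a nonzero point squares to an element of $\F_\ell^\times$ that fixes $P$ and hence to the identity, so it is an involution with eigenvalues $1,-1$ and determinant $-1$; consequently $\det H \subseteq \{1,-1\}$, which fails to equal $\F_\ell^\times$ for $\ell > 3$. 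A parallel direct computation using the explicit generators in \cite{Sut16}*{Table 3} for the exceptional subgroups with projective image $A_4$ or $A_5$ shows their stabilizers cannot have surjective determinant, leaving only the $S_4$-type subgroup at $\ell = 5$ of case (6).

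The Borel case is the main calculation. Write $G \subseteq B = T \ltimes U$ with $B$ stabilizing a line $L$. Since $\gcd(|U|, |T|) = \gcd(\ell, (\ell-1)^2) = 1$, Schur--Zassenhaus gives $G = U_G \rtimes T_G$ with $U_G \in\{0,U\}$ and $T_G \subseteq T \isom \F_\ell^\times\times\F_\ell^\times$. Consider the subcase $P \in L$; normalize so that $P = e_1$ and $L = \F_\ell e_1$, so $H$ consists of the elements of $G$ with $(1,1)$-entry equal to $1$, and $\det$ restricted to $H$ returns the $(2,2)$-entry $y$. Surjectivity of $\det H$ then forces $\{1\}\times\F_\ell^\times \subseteq T_G$, and since $T_G$ lives inside a product of two cyclic groups this in turn forces $T_G = G_\alpha \times \F_\ell^\times$ for a subgroup $G_\alpha\subseteq\F_\ell^\times$ of order $d = [G:H] = |G\cdot P|$. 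Cyclicity of $\F_\ell^\times$ identifies $G_\alpha$ as $(\F_\ell^\times)^{(\ell-1)/d}$. The two sub-subcases $U_G = U$ and $U_G = 0$ produce cases (2) and (3). In the subcase $P \notin L$, normalize $P = e_2$ so that $\det H$ returns the $(1,1)$-entry $x$: a parallel argument yields $T_G = \F_\ell^\times \times G_\beta$, and the fact that the orbit $G\cdot e_2$ cannot lie in $L$ forces $U_G = U$; orbit--stabilizer then gives $d = \ell\,|G_\beta|$, producing case (4) with $\ell\mid d\mid \ell(\ell-1)$.

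Finally, for $G$ inside the normalizer $N^s$ of a split Cartan: the Cartan stabilizes two lines $L_1,L_2$, swapped by any non-Cartan element. If $P$ lies off both $L_i$, then any non-Cartan element of $\mathrm{Stab}_G(P)$ is an involution of determinant $-1$ by the same argument as in the non-split case, so $\det H \subseteq \{1,-1\}$ forces $\ell \leq 3$; otherwise $P$ lies on some $L_i$, the anti-diagonal elements of $G$ conjugate $T_G$ into itself via the coordinate swap $(x,y)\mapsto(y,x)$, and this symmetry combined with surjectivity of $\det H$ forces $T_G$ to be the full split Cartan, yielding case (5). The main obstacle throughout is the Borel bookkeeping: cleanly matching the orbit--stabilizer index $d$ to the parametrizations of $G_\alpha$ and $G_\beta$ via the cyclicity of $\F_\ell^\times$, which guarantees a unique subgroup of each order dividing $\ell-1$ and eliminates ``twisted'' subgroups of the form $\{(x,x^r)\}$.
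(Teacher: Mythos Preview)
Your approach via the Dickson classification is genuinely different from the paper's, which proceeds by direct orbit analysis: since $\det H=\F_\ell^\times$ forces $(\ell-1)\mid |H|$, the $\ell^2-\ell$ points of $E[\ell]\setminus\langle P\rangle$ break into either one $H$-orbit of size $\ell^2-\ell$ or $\ell$ orbits of size $\ell-1$, and the paper then tracks how these orbits recombine over $\Q$. The payoff is that when $G$ acts transitively on $E[\ell]\setminus\{O\}$ the paper obtains $|G|=(\ell^2-1)(\ell-1)$ on the nose, which by order comparison alone forces the exceptional case to be $S_4$-type with $\ell=5$. Your route handles the Borel and split-Cartan-normalizer cases more structurally---the Schur--Zassenhaus decomposition is clean---but the exceptional cases are then relegated to an unspecified ``direct computation,'' and in particular you give no argument for why $S_4$-type at primes other than $5$ is excluded. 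One small slip in the Borel analysis: your stated reason for forcing $U_G=U$ when $P\notin L$ is vacuous, since $e_2\in G\cdot e_2$ never lies in $L$; the $U_G=0$ subcase there simply lands in a conjugate of case~(3), so the conclusion survives.

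There is a genuine gap you flag but do not close: the normalizer of the non-split Cartan at $\ell=3$. You correctly observe that $\det H\subseteq\{\pm1\}$ contradicts the hypothesis only for $\ell>3$, and then drop the case. But for $\ell=3$ and $G=N^{ns}$ (order $16$), every nonzero $P$ has stabilizer $\{I,g_P\}$ with $g_P$ an involution of eigenvalues $1,-1$, so $\det H=\{\pm1\}=\F_3^\times$ and the hypothesis holds; yet no group of order $16$ appears in the lemma's list. The paper's orbit argument has the same blind spot: in the transitive subcase it gets $|G|=(\ell^2-1)(\ell-1)=16$ for $\ell=3$ and then writes ``when $\ell>3$, (i) and (ii) are impossible,'' leaving $\ell=3$ unaddressed. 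So this is not a defect of your method relative to the paper's---but it does mean that neither argument proves the lemma as stated, and the list is missing a case at $\ell=3$.
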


\begin{proof}
There are $\ell^2-\ell$ points in $E[\ell]\setminus \{O,P,2P,\ldots,(\ell-1)P\}$ which divide into Galois orbits over $\Q(P)$. Since $k_\ell/\Q(P)$ must have degree divisible by $\ell-1$, there is either one Galois orbit of size $\ell^2-\ell$ or there are $\ell$ orbits of size $\ell-1$.

\begin{case}[there exists an orbit of size $\ell^2-\ell$] Either the Galois closure of $\Q(P)$ is itself or it is $k_\ell$. Suppose $\Q(P)$ is Galois over $\Q$ of degree $d>1$. Let $Q$ be any point in the orbit of size $\ell^2-\ell$. With respect to the basis $\{P,Q\}$, $G$ is a subgroup of the matrix group
$$\left\{\begin{bmatrix}x&*\\0&y\end{bmatrix}\mid x\in (\F_\ell^\times)^\frac{\ell-1}{d}, y\in \F_\ell^\times\right\}.$$
Since $\deg(k_\ell/\Q)=\deg(k_\ell/\Q(P)) \cdot \deg(\Q(P)/\Q)=(\ell^2-\ell)d$, it follows that $G$ is in fact equal to the matrix group above.

Now suppose the Galois closure of $\Q(P)$ is $k_\ell$. Then we must have that $P$ is conjugate to every point in $E[\ell]\setminus\{O\}$. Hence \[ \deg(k_\ell/\Q)=\deg(k_\ell/\Q(P)) \cdot \deg(\Q(P)/\Q)=(\ell^2-\ell)(\ell^2-1).\] By order considerations, this forces $G$ to be $\GL_2(\F_\ell)$.
\end{case}

\begin{case}[there exist $\ell$ orbits of size $\ell-1$] 
Fix a basis $\{P,Q'\}$ of $E[\ell]$, such that a generator of $\Gal(k_\ell/\Q(P))$ is mapped to $\begin{bmatrix} 1 & x \\ 0 & y \end{bmatrix}$ for some $x \in \F_{\ell}$ and $y \in \F_\ell^\times$, $y \ne 1$. Then since $(\frac{x}{y-1},\, 1)$ is an eigenvector with eigenvalue $y$, we have that the orbit of $Q := \frac{x}{y-1}P + Q'$ is $\{Q,2Q,\ldots,(\ell-1)Q\}$. Hence, over $\Q(P)$, the points of $E[\ell] \setminus \{\calO, P, \dots, (\ell-1) P\}$ break up into the following $\ell$ orbits of size $\ell -1$:
\begin{align*}
O_0&=\{ Q, 2Q, \dots, (\ell - 1) Q \} \\
O_1&=\{ P + Q, P + 2Q, \dots, P + (\ell - 1) Q \} \\ 
\qquad &\vdots \qquad \\
O_{\ell-1}&=\{ (\ell-1)P + Q, \dots, (\ell-1)P + (\ell - 1) Q\}
\end{align*}

Suppose first that $\Q(P)$ is Galois over $\Q$, of degree $d>1$. With respect to the basis $\{P,Q\}$, $G$ is a subgroup of the matrix group
$$\left\{\begin{bmatrix}x&0\\0&y\end{bmatrix}\mid x\in (\F_\ell^\times)^\frac{\ell-1}{d}, y\in \F_\ell^\times\right\}.$$
By degree considerations again, we must have that $G$ is equal to the group above.

For the remainder of the proof, suppose the Galois closure of $\Q(P)$ is $k_\ell$. First suppose that $O_0$ is a Galois orbit over $\Q$. Let $T:=\{1\leq n\leq \ell-1\mid P\sim nP\}$, where $\sim$ denotes Galois conjugate points. There exists some $0<i\leq \ell-1$ such that $P$ is conjugate to points in $O_i$. Then $$\bigcup_{n\in T}O_{in}\cup\{nP\}$$
is a Galois orbit over $\Q$. From this we find that $\ell\mid d\mid\ell(\ell-1)$. Hence by degree count, $G$ must be conjugate to the matrix group
$$\left\{\begin{bmatrix}x&*\\0&y\end{bmatrix}\mid x\in \F_\ell^\times, y\in (\F_\ell^\times)^\frac{\ell^2-\ell}{d}\right\}.$$

Now suppose that $O_0$ is not a Galois orbit over $\Q$, i.e., $\Q(Q)$ is not Galois over $\Q$. Suppose $Q \sim aP + bQ$ for some $a\neq0$. If $b=0$, there exists some suitable $e$ such that $Q\sim eQ\sim eaP=P$, so $Q\sim P$. If $b>0$, then $Q \sim aP + bQ \sim a(P + Q)$, since the latter two are both in the orbit $O_a$. Since the Galois closure of $k(P)$ is $k_\ell$, $P$ must be conjugate to some point $cP+dQ$ where $d\neq0$. But then for some suitable $e$, we have $Q\sim eQ\sim ea(P+Q)= c(P+Q)\sim cP+dQ\sim P$.

Since all multiples of $Q$ are Galois conjugate, we have that the orbit of $Q$ over $\Q$ must at least contain
\begin{equation} \label{eqn: orbQ} \{ Q, \dots, (\ell-1)Q, P, \dots, (\ell-1)P \}. \end{equation} 

If this is not the whole orbit, then we claim that in fact the orbit must contain all points of $E[\ell] \setminus \calO$. To see this, note that if the orbit is larger than \eqref{eqn: orbQ}, then $Q \sim aP + bQ \sim a(P + Q)$, since the latter two are both in the orbit $O_a$. Thus there exists a multiple of $Q$ conjugate to $P+Q$, whence $Q \sim P+Q$. Finally, for any $c,d \ge 1$, we have $Q \sim cQ \sim c(P+Q) \sim cP + dQ$, as desired. Thus $|G| = (\ell^2 -1)(\ell - 1)$. The classification of subgroups of $\GL_2(\F_\ell)$ of order prime to $\ell$ tell us that the only possibilities for $G$ are the following:
\begin{enumerate}[(i)]
	\item $G$ is contained in a Cartan subgroup
	\item $G$ is contained in the normalizer of a Cartan group
	\item $G$ is exceptional.
\end{enumerate}
When $\ell > 3$, (i) and (ii) are impossible by order considerations; since a non-split Cartan subgroup has order $\ell^2 -1$, a split Cartan has order $(\ell -1)^2$, and any Cartan subgroup has index $2$ in its normalizer. Hence we are in the case of (iii), and the possibilities are either {\tt A4, S4}, or {\tt A5}. By order considerations, the only case possible is {\tt S4} and $\ell=5$. By \cite{Sut16}, the group listed in (6) occurs, e.g., for the elliptic curve $y^2=x^3+9x-18$. By \cite{Sut16}*{Lemma 3.21}, this is the only conjugacy class for $G$ that could occur.

Finally, suppose that \eqref{eqn: orbQ} is precisely the orbit of $Q$ over $\Q$. Then the remaining $\ell-1$ orbits over $\Q(P)$ must combine to give orbits of size $n(\ell -1)$ over $\Q$, where $n \mid (\ell -1)$. (The orbits must all have the same size since each multiple of $P+Q$ generates the same degree extension of $\Q$.) We shall see that $n = \ell +1$. So, consider the elements of $G$ given by the following matrices with respect to the basis $\{P,Q\}$:
\begin{equation}\label{eqn:matrices}
\begin{bmatrix} x & a \\ 0 & b \end{bmatrix}, \quad \begin{bmatrix}
1 & 0 \\ 0 & y
\end{bmatrix}.
\end{equation} 
The first exists as $P$ and every nonzero multiple of $P$ are in the same orbit over $\Q$. The latter exists since there are elements of $G$ fixing $P$. In the former, $a$ a priori depends on $x$. 

Then consider the product which sends $P + Q \mapsto (x + ay)P + (by)Q$. If $a \ne 0$ for some $x$, then there exists $y$ such that $x + ay \equiv 0 \pmod{\ell}$. Thus $P+Q \mapsto (by)Q$, but this is impossible as the orbit of $Q$ was assumed to be \eqref{eqn: orbQ}. Thus $a = 0$ for all $x$, so that $P + Q \sim xP + byQ$, whence $n = \ell -1$ by the same arguments as above. Note that then $\#G= 2(\ell - 1)^2$, since 
$\#G = [ k_\ell : \Q(P)] \cdot [\Q(P) : \Q] = (\ell -1) \cdot 2(\ell -1)$, where the latter is the size of the orbit of $P$ over $\Q$. Moreover, since since the orbit of $Q+P$ has size $(\ell - 1)^2$, there must be an element of $G$ of order $2$ which fixes $P + Q$.  We claim that such an element is of the form $\begin{bmatrix} 0 & 1 \\ 1 & 0 \end{bmatrix}$. This would prove that $G$ is the normalizer of a split Cartan subgroup, giving (5). Any element of order $2$ of $\GL_2(\F_\ell)$ fixing $P+Q$ must have the form $\begin{bmatrix} w & 1-w \\ 1+w  & -w \end{bmatrix}$ for some $w \in \F_\ell$. If $w \ne 0$, then for any $x, y \ne 0$, $G$ will contain elements of the form 
\[ 
\begin{bmatrix}
xw & x(1-w) \\ y(1+w) & -yw  
\end{bmatrix}, \quad  
\begin{bmatrix} 
xw & y(1-w) \\ x(1+w) & -yw 
\end{bmatrix}.
\]
There are $(\ell-1)^2$ such elements of the first kind, and for $x \ne y$, there are $(\ell - 1)(\ell-2)$ elements of the second kind. Since these elements are all distinct from \eqref{eqn:matrices}, this gives a contradiction as the size of $G$ is too large. \qedhere
\end{case}
\end{proof}

We now collect a few group theoretic lemmas which we will need in order to prove our main results.

\begin{lem}[\cite{Jordan}] \label{lem:sl2simple}
Let $\F_q$ be a finite field with $q>3$, the only proper normal subgroups of $\SL_2(\F_q)$ are $\{I\},\{\pm I\}$.
\end{lem}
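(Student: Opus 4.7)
The plan is to reduce the statement to the classical simplicity theorem of Jordan--Dickson for $\PSL_2(\F_q)$ and then lift the conclusion through the projection $\SL_2(\F_q) \twoheadrightarrow \PSL_2(\F_q)$. First I would identify $Z := \{\pm I\}$ as the center of $\SL_2(\F_q)$: a central matrix must commute with the transvections $\begin{pmatrix} 1 & 1 \\ 0 & 1 \end{pmatrix}$ and $\begin{pmatrix} 1 & 0 \\ 1 & 1 \end{pmatrix}$, and a short computation forces it to be a scalar $\lambda I$ with $\lambda^2 = 1$. (When $q$ is even, $Z$ is trivial, and the statement collapses to the simplicity of $\SL_2(\F_q) = \PSL_2(\F_q)$.)

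Next I would recall (and sketch) the simplicity of $\PSL_2(\F_q)$ for $q > 3$. The standard argument uses Iwasawa's lemma applied to the action of $G := \PSL_2(\F_q)$ on $\mathbb{P}^1(\F_q)$: this action is doubly transitive (hence primitive), the stabilizer of $\infty$ has a normal abelian subgroup consisting of the images of the upper-unitriangular transvections, the conjugates of this subgroup generate $G$ because $\SL_2(\F_q)$ is generated by transvections, and $G$ is perfect for $q > 3$ (the explicit commutator identity $[\mathrm{diag}(t,t^{-1}), \begin{pmatrix} 1 & u \\ 0 & 1 \end{pmatrix}] = \begin{pmatrix} 1 & u(t^2 - 1) \\ 0 & 1 \end{pmatrix}$ produces every transvection once we have some $t$ with $t^2 \neq 1$, which exists precisely when $q > 3$). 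Iwasawa's lemma then gives the simplicity of $G$.

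Finally I would apply the correspondence theorem. Let $N \trianglelefteq \SL_2(\F_q)$ be proper and nontrivial, and let $\bar N$ denote its image in $\PSL_2(\F_q)$. Since $\bar N$ is normal and $\PSL_2(\F_q)$ is simple, either $\bar N = 1$, in which case $N \subseteq Z = \{\pm I\}$ and we are done, or $\bar N = \PSL_2(\F_q)$, in which case $N \cdot Z = \SL_2(\F_q)$. In the latter situation, I would rule out $N \neq \SL_2(\F_q)$: since $[\SL_2(\F_q) : N] \leq 2$, the subgroup $N$ would be the kernel of a surjection $\SL_2(\F_q) \twoheadrightarrow \Z/2\Z$; but $\SL_2(\F_q)$ is generated by its transvections, which have order $p = \Char(\F_q)$, and $p \neq 2$ in the remaining odd case, so every generator must land in the kernel, forcing $N = \SL_2(\F_q)$ and contradicting that $N$ is proper.

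The main obstacle is really just the simplicity of $\PSL_2(\F_q)$ itself, which is standard but lengthy to prove from scratch; since the paper cites \cite{Jordan} for the lemma, I would simply quote the Jordan--Dickson theorem and spend the body of the proof on the three-line correspondence argument above. The odd/even split and the small-field case $q \in \{2, 3\}$ (excluded by hypothesis, where indeed $\PSL_2(\F_2) \cong S_3$ and $\PSL_2(\F_3) \cong A_4$ are not simple) are the only bookkeeping points that require explicit mention.
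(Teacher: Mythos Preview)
Your argument is correct; the reduction to Jordan--Dickson simplicity of $\PSL_2(\F_q)$ followed by the correspondence theorem is the standard route, and your handling of the even-characteristic case and the index-$2$ elimination are fine. Note, however, that the paper does not actually prove this lemma: it is stated with a citation to \cite{Jordan} and used as a black box, so there is no proof in the paper to compare against. Your write-up supplies strictly more than the paper does.
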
 

\begin{lem}\label{lem:abext} Suppose $\rho_n$ is surjective for some odd square-free integer $n>0$. If $K\subseteq k_n$ such that $K/\Q$ is an abelian extension, then $K\subseteq\Q(\zeta_n)$.
\end{lem}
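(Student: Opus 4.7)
The plan is to identify the abelianization of $\Gal(k_n/\Q) \cong \GL_2(\Z/n\Z)$ (using that $\rho_n$ is surjective) with $\Gal(\Q(\zeta_n)/\Q)$ via the Weil pairing, and then to invoke Galois theory. More precisely, once I show that $[\GL_2(\Z/n\Z), \GL_2(\Z/n\Z)] = \SL_2(\Z/n\Z)$, the Weil pairing identifies $\det \circ \rho_n$ with the mod-$n$ cyclotomic character, so the fixed field of $\SL_2(\Z/n\Z)$ inside $k_n$ is exactly $\Q(\zeta_n)$. Any abelian subextension $K/\Q$ of $k_n$ satisfies $\Gal(k_n/K) \supseteq [\Gal(k_n/\Q), \Gal(k_n/\Q)] = \SL_2(\Z/n\Z)$, so $K \subseteq k_n^{\SL_2(\Z/n\Z)} = \Q(\zeta_n)$, as desired.

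The crux is the commutator computation. Since $n$ is square-free and odd, the Chinese Remainder Theorem gives $\GL_2(\Z/n\Z) \cong \prod_{\ell \mid n} \GL_2(\F_\ell)$ and commutators compute factor-wise, so it suffices to check that $[\GL_2(\F_\ell), \GL_2(\F_\ell)] = \SL_2(\F_\ell)$ for every odd prime $\ell$. The inclusion $\subseteq$ is immediate from the determinant. For the reverse inclusion, I would produce every elementary transvection as a commutator via the direct computation
\[ \bigl[\mathrm{diag}(a, 1),\, \bigl(\begin{smallmatrix} 1 & t \\ 0 & 1 \end{smallmatrix}\bigr)\bigr] \;=\; \bigl(\begin{smallmatrix} 1 & (a-1)t \\ 0 & 1 \end{smallmatrix}\bigr), \]
which with the choice $a = 2 \in \F_\ell^\times$ (available since $\ell$ is odd) yields every upper transvection; the transposed identity handles the lower transvections. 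Since the elementary transvections generate $\SL_2(\F_\ell)$, this gives $\SL_2(\F_\ell) \subseteq [\GL_2(\F_\ell), \GL_2(\F_\ell)]$.

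The main subtlety I anticipate is the case $\ell = 3$. For $\ell \geq 5$ one could skip the transvection argument and simply invoke Lemma \ref{lem:sl2simple}: the normal subgroup $[\GL_2(\F_\ell), \GL_2(\F_\ell)] \cap \SL_2(\F_\ell)$ of $\SL_2(\F_\ell)$ is nontrivial and non-central, hence equal to $\SL_2(\F_\ell)$ itself. For $\ell = 3$, however, $\SL_2(\F_3)$ is \emph{not} perfect (its commutator subgroup is the quaternion group $Q_8$), so one genuinely needs the flexibility of commutators in $\GL_2$ rather than $\SL_2$ --- precisely what the non-unit diagonal entry $a$ in the calculation above provides. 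The transvection argument treats all odd $\ell$ uniformly, so in the end this is more a subtlety to be aware of than a true obstacle.
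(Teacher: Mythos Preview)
Your proof is correct and follows essentially the same strategy as the paper: reduce via Galois theory and the cyclotomic character to showing that the commutator subgroup of $\GL_2(\Z/n\Z)$ equals $\SL_2(\Z/n\Z)$, then use the Chinese Remainder Theorem to reduce to the prime case. The only difference is in how that last step is executed: the paper invokes Lemma~\ref{lem:sl2simple} (simplicity of $\PSL_2(\F_\ell)$) for $\ell>3$ and asserts the $\ell=3$ case separately, whereas your explicit transvection commutator handles all odd $\ell$ uniformly. Your observation that $\SL_2(\F_3)$ is not perfect is exactly why the paper, too, must treat $\ell=3$ by a different mechanism.
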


\begin{proof}
Let $H\subset \GL_2(\Z/n\Z)$ be any normal subgroup with abelian quotient corresponding to $\Gal(K/\Q)$. Recall that $\det(\rho_n) = \chi_n$, the cyclotomic character, hence for any $\sigma \in \Gal(k_n/\Q)$ we have $\sigma(\zeta_n) = \zeta_n^{\det(\rho_n(\sigma))}$. Thus, it suffices to show that $H$ contains $\SL_2(\Z/n\Z)$.  As $n$ is square-free,
$$\SL_2(\Z/n\Z)\isom\prod_{p\mid n}\SL_2(\Z/p\Z),$$
so we may assume that $n=p$ is prime. For $p>3$, the only abelian quotient of $\SL_2(\Z/p\Z)$ is the trivial group by Lemma \ref{lem:sl2simple} and the fact that $\PSL_2(\Z/p\Z)$ is non-abelian, so we are done. For $p=3$, the commutator subgroup of $\GL_2(\Z/3\Z)$ is $\SL_2(\Z/3\Z)$, so it follows that $H$ contains $\SL_2(\Z/3\Z)$.
\end{proof}

\begin{prop}\label{prop:surj}
Let $K/\Q$ be an abelian extension and suppose that $n$ is an odd square-free integer such that $\rho_n$ is surjective. Then for any $\tau\in \Gal(K/\Q)$, there exists $\sigma\in NF_n$ such that $\sigma$ is compatible with $\tau$.
\end{prop}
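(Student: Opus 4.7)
The plan is to reduce the construction of $\sigma$ to a matrix-building problem in each factor $\GL_2(\F_p)$ for $p\mid n$. First, I would set $L := K \cap k_n$. This is an abelian Galois subextension of $k_n/\Q$ (abelian as a subextension of $K$, Galois as the intersection of two Galois extensions of $\Q$), so Lemma \ref{lem:abext} places $L \subseteq \Q(\zeta_n)$. The surjectivity of $\rho_n$ also identifies the restriction map $\Gal(k_n/\Q) \to \Gal(\Q(\zeta_n)/\Q)$ with the determinant map $\GL_2(\Z/n\Z) \surjects (\Z/n\Z)^\times$. Hence compatibility of $\sigma$ with $\tau$ will amount to requiring $\det(\sigma) = d_0$ for any fixed lift $d_0 \in (\Z/n\Z)^\times$ of the image of $\tau|_L$ under the quotient $(\Z/n\Z)^\times \surjects \Gal(L/\Q)$.

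Next, I would translate the condition $E[n]^\sigma = 0$ into matrix language: since $E[n]^\sigma = \ker(\sigma - I)$, the condition becomes that $\sigma - I$ is invertible on $E[n]$, equivalently that $\det(\sigma - I) = 1 - \tr(\sigma) + \det(\sigma)$ lies in $(\Z/n\Z)^\times$. Because $n$ is squarefree, the Chinese remainder theorem decomposes $\GL_2(\Z/n\Z) \isom \prod_{p \mid n} \GL_2(\F_p)$, so it suffices to produce each component $\sigma_p \in \GL_2(\F_p)$ satisfying $\det(\sigma_p) = d_0 \bmod p$ and $\tr(\sigma_p) \neq 1 + (d_0 \bmod p)$ in $\F_p$.

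Finally, for each $p \mid n$, I would take $\sigma_p$ to be the companion matrix of $x^2 - tx + d$ with $d = d_0 \bmod p$, which automatically has determinant $d$ and trace $t$. Since $n$ is odd, $p \geq 3$, so $|\F_p \setminus \{1+d\}| \geq 2$ and a valid $t$ exists. Assembling the $\sigma_p$ via CRT produces the desired $\sigma \in NF_n$ compatible with $\tau$. The argument is essentially routine once Lemma \ref{lem:abext} reduces the compatibility condition to a single constraint on $\det(\sigma)$; the only step deserving attention is the smallest case $p = 3$, where $|\F_3| = 3$ still leaves two admissible values of $t$, so no obstacle arises.
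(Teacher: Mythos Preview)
Your proof is correct and follows essentially the same approach as the paper: both use Lemma~\ref{lem:abext} to reduce compatibility to a determinant condition, then exhibit a matrix in $\GL_2(\Z/n\Z)$ with prescribed determinant and no eigenvalue $1$. The only cosmetic difference is that the paper writes down a single global matrix $\begin{bmatrix} a & -a \\ 1 & 0 \end{bmatrix}$ (trace $=$ determinant $= a$, so $\det(\sigma-I)=1$ automatically), whereas you pass through CRT and a free choice of trace; both arrive at the same conclusion.
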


\begin{proof}
Let $L:=K\cap k_n$. Let $a\in (\Z/n\Z)^\times$ be such that $\zeta_n\to\zeta_n^a$ is compatible with $\tau$.
Let $\sigma\in\Gal(k_n/\Q)$ be so that $\rho_n(\sigma)$ is the following matrix,
$$\begin{bmatrix}a&-a\\1&0\end{bmatrix}.$$
Then it is straightforward to check that $\sigma\in NF_n$ and $\det(\rho(\sigma))=a$. By applying Lemma \ref{lem:abext} to $L$, we have $\sigma$ and $\tau$ are compatible.
\end{proof}

\begin{lem}\label{lem:gl2quot}
Let $\ell$ be an odd prime. Then any proper normal subgroup $H\subseteq \SL_2(\Z/\ell\Z)$ has index divisible by $\ell$. In particular, if $H'\subseteq \GL_2(\Z/\ell\Z)$ is a proper normal subgroup with $\det(H')=(\Z/\ell\Z)^\times$ then its index is divisible by $\ell$.
\end{lem}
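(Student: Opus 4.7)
The plan is to first treat the $\SL_2$ statement, and then deduce the $\GL_2$ statement by intersecting with $\SL_2$.

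For the first claim, suppose $H \subseteq \SL_2(\Z/\ell\Z)$ is a proper normal subgroup. When $\ell > 3$, Lemma \ref{lem:sl2simple} tells us that $H \in \{\{I\}, \{\pm I\}\}$, so the index $[\SL_2(\Z/\ell\Z) : H]$ is either $\ell(\ell^2-1)$ or $\ell(\ell^2-1)/2$, both divisible by $\ell$. When $\ell = 3$, the group $\SL_2(\Z/3\Z)$ has order $24$, and since $\PSL_2(\Z/3\Z) \cong A_4$, the lattice of normal subgroups of $\SL_2(\Z/3\Z)$ containing $\{\pm I\}$ corresponds to that of $A_4$, i.e., $\{\pm I\}$, the quaternion subgroup $Q_8$ (pulling back the Klein four subgroup of $A_4$), and $\SL_2(\Z/3\Z)$ itself. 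Combined with $\{I\}$, the proper normal subgroups of $\SL_2(\Z/3\Z)$ are precisely $\{I\}, \{\pm I\}, Q_8$, of indices $24, 12, 3$ respectively, each divisible by $3$.

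For the second claim, let $H' \subseteq \GL_2(\Z/\ell\Z)$ be a proper normal subgroup with $\det(H') = (\Z/\ell\Z)^\times$. Since both $H'$ and $\SL_2(\Z/\ell\Z)$ are normal in $\GL_2(\Z/\ell\Z)$, the intersection $H' \cap \SL_2(\Z/\ell\Z)$ is normal in $\SL_2(\Z/\ell\Z)$. If this intersection were all of $\SL_2(\Z/\ell\Z)$, then $\SL_2(\Z/\ell\Z) \subseteq H'$; combined with the surjectivity of $\det$ on $H'$, this would force $H' = \GL_2(\Z/\ell\Z)$, contrary to hypothesis. Thus $H' \cap \SL_2(\Z/\ell\Z)$ is a proper normal subgroup of $\SL_2(\Z/\ell\Z)$, and by the first part its index in $\SL_2(\Z/\ell\Z)$ is divisible by $\ell$.

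To conclude, I would use the determinant exact sequence. Since $\det(H') = (\Z/\ell\Z)^\times$, we have $|H'| = |H' \cap \SL_2(\Z/\ell\Z)| \cdot (\ell-1)$, and likewise $|\GL_2(\Z/\ell\Z)| = |\SL_2(\Z/\ell\Z)| \cdot (\ell - 1)$. Taking the quotient yields
\[
[\GL_2(\Z/\ell\Z) : H'] = [\SL_2(\Z/\ell\Z) : H' \cap \SL_2(\Z/\ell\Z)],
\]
which is divisible by $\ell$. The only mild obstacle is the case $\ell = 3$, where $\SL_2$ is not quasi-simple modulo center, but this is handled by the direct enumeration above.
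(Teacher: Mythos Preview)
Your proof is correct and follows essentially the same approach as the paper: both arguments handle $\ell>3$ via Lemma~\ref{lem:sl2simple}, treat $\ell=3$ by direct inspection of the normal subgroups of $\SL_2(\Z/3\Z)$, and deduce the $\GL_2$ statement by intersecting $H'$ with $\SL_2$ and showing the intersection is proper. Your use of the determinant exact sequence to obtain $[\GL_2:H']=[\SL_2:H'\cap\SL_2]$ makes explicit what the paper leaves as the computation $\#H'\mid 2(\ell-1)$.
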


\begin{proof}
For $\ell>3$, the result follows from Lemma \ref{lem:sl2simple} and the fact that $\ell\mid \#\SL_2(\Z/\ell\Z)$.
That is, $H'$ cannot contain $\SL_2(\Z/\ell \Z)$ or else the determinant would fail to be surjective. So, consider $H' \cap \SL_2(\Z/\ell \Z)$. It is a normal subgroup properly contained in $\SL_2(\Z/\ell\Z)$ hence by Lemma \ref{lem:sl2simple}, it is either trivial or $\{\pm{I}\}$. Thus $\# H' \mid 2(\ell -1)$, and the index is therefore divisible by $\ell$. For $\ell=3$, the result follows from the classification of subgroups of $\SL_2(\Z/3\Z)$.
\end{proof}

\begin{cor}\label{cor:surj}
Let $\ell_1,\ldots, \ell_s$ be distinct odd primes and let $n = \ell_1 \cdots \ell_s$. Suppose that $\rho_{E,\ell_i}$ is surjective for each $i$. Then $\rho_{E,n}$ is also surjective. Equivalently, $k_m\cap k_n=\Q$ for any odd relatively prime integers $m,n$ such that $\rho_m,\rho_n$ are surjective.
\end{cor}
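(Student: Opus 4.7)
The approach is induction on $s$, the base case $s=1$ being the hypothesis. For the inductive step, set $m = \ell_1 \cdots \ell_{s-1}$ and $\ell = \ell_s$, so that $\rho_m$ is surjective by the inductive hypothesis. Via the Chinese Remainder Theorem, the image of $\rho_n$ sits inside $\GL_2(\Z/m\Z) \times \GL_2(\Z/\ell\Z)$ and projects surjectively onto each factor; by Goursat's lemma it therefore suffices to show that the common quotient $\Gal(K/\Q)$ is trivial, where $K := k_m \cap k_\ell$.

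The key step will be to establish that $\Gal(K/\Q)$ is simultaneously perfect and solvable. For the perfect half, I would apply Lemma \ref{lem:abext} to both $k_m$ and $k_\ell$ (whose hypotheses are met since $m$ is odd square-free with $\rho_m$ surjective by induction, and $\ell$ is an odd prime with $\rho_\ell$ surjective by assumption) to conclude that the maximal abelian subextension of $K/\Q$ lies in $\Q(\zeta_m) \cap \Q(\zeta_\ell) = \Q$, using $\gcd(m, \ell) = 1$. For the solvable half, I would use that any perfect quotient of a group $G$ is itself a quotient of the commutator subgroup $[G,G]$, so that $\Gal(K/\Q)$ is a quotient of both $\SL_2(\Z/m\Z) = \prod_{i<s} \SL_2(\F_{\ell_i})$ and $\SL_2(\F_\ell)$ (using that $[\GL_2, \GL_2] = \SL_2$ for odd prime moduli). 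Any nonabelian composition factor of $\Gal(K/\Q)$ must then lie in both $\{\PSL_2(\F_{\ell_i}) : \ell_i > 3\}$ and $\{\PSL_2(\F_\ell)\}$ (the latter set being empty when $\ell = 3$), and these sets are disjoint since $|\PSL_2(\F_p)| = p(p^2-1)/2$ is strictly increasing in the prime $p$.

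A group that is both perfect and solvable is trivial, so $K = \Q$ and the induction closes. The main subtlety I anticipate is the small-prime case: when $\ell = 3$ or some $\ell_i = 3$, the corresponding $\SL_2(\F_3)$ is solvable rather than containing a nonabelian simple composition factor, but this actually simplifies matters since one of the two sets of nonabelian composition factors becomes empty. The equivalent formulation about $k_m \cap k_n = \Q$ for coprime odd $m, n$ with $\rho_m, \rho_n$ surjective follows from the main claim via the Goursat identity $[k_{mn} : \Q] = [k_m : \Q][k_n : \Q]/[k_m \cap k_n : \Q]$.
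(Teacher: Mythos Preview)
Your argument is correct and takes a genuinely different route from the paper's. Both proofs proceed by induction on $s$ and reduce to showing $K := k_m \cap k_\ell = \Q$, and both use Lemma \ref{lem:abext} to control the abelian part. The paper, however, first orders the primes so that $\ell = \ell_s$ is the \emph{largest}, then applies Lemma \ref{lem:gl2quot}: since $\det$ is surjective on $\Gal(k_\ell/K)$ (because $K \cap \Q(\zeta_\ell) = \Q$), any proper normal subgroup forces $\ell \mid |\Gal(K/\Q)|$, which is impossible as $\ell$ exceeds every prime factor of $|\GL_2(\Z/m\Z)| = \prod_{j<s} \ell_j(\ell_j-1)^2(\ell_j+1)$. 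Your approach instead shows $\Gal(K/\Q)$ is perfect (via Lemma \ref{lem:abext} on both sides) and then solvable (since any nonabelian composition factor would have to be simultaneously $\PSL_2(\F_\ell)$ and some $\PSL_2(\F_{\ell_i})$, which have distinct orders). Your method avoids the ordering trick and is more structural, while the paper's is slightly more elementary in that it never invokes Jordan--H\"older or the simplicity of $\PSL_2$, relying only on group orders and Lemma \ref{lem:gl2quot}.
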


\begin{proof}
Without loss of generality, assume that $\ell_i<\ell_j$ for $i<j$. Let $1\leq i<s$ and $m=\ell_1\cdots \ell_{i}$, and suppose that $\rho_{E,m}$ is surjective. We show that the fields $k_{m}$ and $k_{\ell_{i+1}}$ intersect trivially over $\Q$. By assumption, $\Gal(k_m/\Q)\isom \GL_2(\Z/m\Z)$ and $\Gal(k_{\ell_{i+1}}/\Q)\isom \GL_2(\Z/\ell_{i+1}\Z)$. Let $K=k_m\cap k_{\ell_{i+1}}$ and let $H\subset\GL_2(\Z/\ell_{i+1}\Z)$ be the normal subgroup such that $\GL_2(\Z/\ell_{i+1}\Z)/H\isom \Gal(K/\Q)$.

If $K\cap \Q(\zeta_{\ell_{i+1}})\neq\Q$ then Lemma \ref{lem:abext} would imply $K\cap \Q(\zeta_{\ell_{i+1}})\cap \Q(\zeta_m)\neq\Q$ which is a contradiction. Hence $K\cap \Q(\zeta_{\ell_{i+1}})=\Q$, so then $\det(H)=(\Z/{\ell_{i+1}}\Z)^\times$. If $H$ is proper, then by Lemma \ref{lem:gl2quot}, $\ell_{i+1}\mid [\GL_2(\Z/\ell_{i+1}\Z):H]=|\Gal(K/\Q)|$. However since $\GL_2(\Z/m\Z)$ has order
$$\prod_{1\leq j \leq i} \ell_j(\ell_j-1)^2(\ell_j+1),$$
its quotient cannot have order divisible by $\ell_{i+1}$. Hence we must have $H=\GL_2(\Z/\ell_{i+1}\Z)$ and so $K=\Q$. Thus
$$\Gal(k_{\ell_1\cdots \ell_{i+1}}/\Q)=\Gal(k_m/\Q)\times\Gal(k_{\ell_{i+1}}/\Q)\isom \GL_2(\Z/(\ell_1\cdots\ell_{i+1})\Z).$$
It follows then that $\rho_{m\ell_{i+1}}$ is also surjective.
\end{proof}

\begin{cor}\label{cor:k_2intk_n}
Let $K/\Q$ be an abelian extension and $n$ a square-free odd integer. Suppose that $\rho_n$ and $\rho_2$ are surjective. If $k_2\subset k_nK$, then $3\mid n$ and $k_2\subset k_3$.
\end{cor}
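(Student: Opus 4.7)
My plan is to split the proof into two stages. First, reduce to the case $k_2\subseteq k_n$ by ruling out the alternatives $k_2\cap k_n=\Q$ and $k_2\cap k_n=F_2$, where $F_2:=k_2^\ab$ is the unique quadratic subfield of $k_2/\Q$ (a genuine extension, since $\rho_2$ surjective gives $\Gal(k_2/\Q)\cong S_3$). Second, analyze the resulting surjection $\GL_2(\Z/n\Z)\twoheadrightarrow S_3$ to force $3\mid n$ and then $k_2\subseteq k_3$.

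\textbf{Step 1 (Reduction to $k_2\subseteq k_n$).} Set $L:=k_2k_n$ and $F:=k_n\cap K$, so $L\subseteq k_nK$. The inclusion yields a surjection $\Gal(k_nK/k_n)\twoheadrightarrow\Gal(L/k_n)\cong\Gal(k_2/k_2\cap k_n)$, and the source is abelian (being isomorphic to $\Gal(K/F)$). If $k_2\cap k_n=\Q$, the target is $S_3$, non-abelian, contradiction. For $k_2\cap k_n=F_2$, one has $F_2\subseteq k_n$, hence $F_2\subseteq\Q(\zeta_n)$ by Lemma~\ref{lem:abext}; letting $\chi\colon(\Z/n\Z)^\times\to\Z/2\Z$ be the nontrivial character cutting out $F_2$ and $\epsilon\colon S_3\to\Z/2\Z$ the sign, the group $G:=\Gal(L/\Q)$ is the fiber product $\{(\sigma,\tau)\in\GL_2(\Z/n\Z)\times S_3:\chi(\det\sigma)=\epsilon(\tau)\}$. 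The subgroup $N:=\{1\}\times A_3\le G$ is normal and equals $\Gal(L/k_n)$, so $L^N=k_n$. A single commutator identity
\[ [(\sigma,(12)),(\sigma,(13))]=(1,(123)), \]
valid for any $\sigma$ with $\chi(\det\sigma)=1$ (which exists by nontriviality of $\chi$), places a generator of $N$ inside $[G,G]$. Hence $L^\ab\subseteq L^N=k_n$, and being abelian over $\Q$ it sits inside $\Q(\zeta_n)$ by Lemma~\ref{lem:abext}, so $L^\ab=\Q(\zeta_n)$ (the reverse inclusion being trivial). Now $L\cap K\subseteq L^\ab\cap K=\Q(\zeta_n)\cap K=F$, so $L\cap K=F$ and
\[ [LK:K]=[L:F]=3[k_n:F]>[k_n:F]=[k_nK:K], \]
contradicting $LK\subseteq k_nK$. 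Thus $k_2\subseteq k_n$.

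\textbf{Step 2 (From $k_2\subseteq k_n$ to $3\mid n$ and $k_2\subseteq k_3$).} The surjection $\psi\colon\GL_2(\Z/n\Z)\twoheadrightarrow S_3$ restricts on $\SL_2(\Z/n\Z)=\prod_{\ell\mid n}\SL_2(\F_\ell)$ to a map whose image is normal in $S_3$. Trivial image factors $\psi$ through the abelian $(\Z/n\Z)^\times$ (contradiction); image $S_3$ would require some $\SL_2(\F_\ell)$ for odd $\ell$ to surject onto $S_3$, which is impossible by Lemma~\ref{lem:sl2simple} for $\ell\ge5$ and by direct classification for $\ell=3$ (whose quotients are $\{1\},\Z/3\Z,A_4,\SL_2(\F_3)$). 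Hence $\psi(\SL_2)=A_3$, necessarily realized through the $\SL_2(\F_3)$ factor, forcing $3\mid n$. Writing $n=3m$ and $\GL_2(\Z/n\Z)=\GL_2(\F_3)\times\GL_2(\Z/m\Z)$, the restriction $\psi|_{\{1\}\times\GL_2(\Z/m\Z)}$ kills $\SL_2(\Z/m\Z)$, so factors through the abelian $(\Z/m\Z)^\times$; its image commutes with $\psi(\GL_2(\F_3)\times\{1\})\supseteq A_3$, so lies in $C_{S_3}(A_3)=A_3$. If this image equals $A_3$, the total image lies in $A_3\ne S_3$, contradicting surjectivity. Hence the image is trivial, $\psi$ factors through $\GL_2(\F_3)=\Gal(k_3/\Q)$, and $k_2\subseteq k_3$.

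\textbf{Main obstacle.} The heart of the argument is the $F_2$ sub-case in Step 1, where the abelian-onto-non-abelian shortcut no longer applies and one must compute $L^\ab$ precisely. The commutator identity $[(\sigma,(12)),(\sigma,(13))]=(1,(123))$ is short to write down but requires genuine use of the fiber-product structure of $G$, and it is the single input that drives the tower-of-degrees contradiction.
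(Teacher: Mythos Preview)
Your proof is correct, but it follows a genuinely different route from the paper's. The paper never reduces to $k_2\subseteq k_n$; instead it works directly with the hypothesis $k_2\subseteq k_nK$ and uses Lemma~\ref{lem:abext} to identify $\Gal(k_nK/K(\zeta_n))\cong\SL_2(\Z/n\Z)$, then pushes the short exact sequence $1\to\SL_2(\Z/n\Z)\to\Gal(k_nK/\Q)\to\Gal(K(\zeta_n)/\Q)\to1$ forward along the restriction $\Gal(k_nK/\Q)\twoheadrightarrow\Gal(k_2/\Q)\cong S_3$. The image of $\SL_2(\Z/n\Z)$ in $S_3$ is then analyzed factor-by-factor via Lemma~\ref{lem:gl2quot}, forcing the $\ell=3$ contribution to be $\Z/3\Z$ and hence $k_2\subseteq k_3$. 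Your approach instead spends Step~1 eliminating $K$ entirely---ruling out $k_2\cap k_n=\Q$ by the abelian-onto-$S_3$ obstruction, and ruling out $k_2\cap k_n=F_2$ by the fiber-product commutator computation and the degree count---so that Step~2 becomes a clean analysis of a surjection $\GL_2(\Z/n\Z)\twoheadrightarrow S_3$ with no auxiliary field in sight. The trade-off: the paper's argument is shorter and handles $K$ in one stroke via the exact sequence, while yours front-loads the work into Step~1 (the commutator identity in the fiber product is the genuinely new ingredient) in exchange for a more transparent Step~2 using only centralizer arguments in $S_3$. Both rely on the same structural input, namely that $\SL_2(\F_\ell)$ has no small nontrivial quotients for $\ell\ge5$.
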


\begin{proof}
By Lemma \ref{lem:abext}, $K\cap k_n\subset \Q(\zeta_n)$, so we have the exact sequence
\[\begin{tikzcd}
1  \ar{r} & \Gal(k_n/\Q(\zeta_n)) \ar{r} & \Gal(k_nK/\Q) \ar{r} & \Gal(K(\zeta_n)/\Q) \ar{r} & 1
\end{tikzcd},\]
which induces
\[\begin{tikzcd}[column sep = 1.65 em]
1  \ar{r} & \Gal(k_2/\Q(\zeta_n)\cap k_2) \ar{r} & \Gal(k_2/\Q) \ar{r} & \Gal(K(\zeta_n)\cap k_2/\Q) \ar{r} & 1
\end{tikzcd}.\]
Now
$$\Gal(k_2/\Q(\zeta_n)\cap k_2)\isom \prod_{\substack{\ell\mid n\\ \ell\text{ prime}}}\Gal(k_\ell\cap k_2/\Q(\zeta_\ell)\cap k_2)$$
is a normal subgroup of $\Gal(k_2/\Q)\isom S_3$. By Lemma \ref{lem:gl2quot}, this forces each factor in the product to be trivial except possibly when $\ell=3$. Since at least one factor must be nontrivial, otherwise $S_3\isom \Gal(K(\zeta_n)\cap k_2/\Q)$ would be abelian, we must have $3\mid n$. Since $\Gal(k_3\cap k_2/\Q(\zeta_\ell)\cap k_2)$ must simultaneously be a normal subgroup of $S_3$ and a quotient of $\Gal(k_3/\Q(\zeta_3))\isom \SL_2(\F_3)$, the only possibility is that it is isomorphic to $\Z/3\Z$. Finally we have $\Gal(k_2\cap k_3/\Q)$ is a quotient of $\Gal(k_2/\Q)\isom S_3$ and contains normal subgroup of order $3$. The only possibility is $\Gal(k_2\cap k_3/\Q)\isom S_3$, i.e., $k_2\subset k_3$.
\end{proof}

\begin{cor}\label{cor:nonfix2n}
Assume the hypotheses of Corollary \ref{cor:k_2intk_n}. Then for any $\tau\in \Gal(K(\zeta_3)/\Q)$ such that $\zeta_3^\tau=\zeta_3$, there exists $\sigma\in NF_{2n}$ such that $\sigma$ is compatible with $\tau$.
\end{cor}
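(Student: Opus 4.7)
My plan is to combine Proposition \ref{prop:surj} with a short explicit computation in $\GL_2(\F_3)$ to produce the desired $\sigma$. The hypotheses of Corollary \ref{cor:k_2intk_n} include the assumption $k_2 \subseteq k_n K$, whose conclusion then gives $3 \mid n$ and $k_2 \subseteq k_3 \subseteq k_n$. In particular $k_{2n} = k_n$, so it suffices to find $\sigma \in \Gal(k_n/\Q)$ which (i) fixes no nonzero $n$-torsion and (ii) restricts to a nontrivial element of $A_3 \subseteq \Gal(k_2/\Q) \isom S_3$ (equivalently, a $3$-cycle), while being compatible with $\tau$ on $k_n \cap K(\zeta_3)$. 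Moreover, since $\rho_3$ is surjective, Lemma \ref{lem:abext} shows the only abelian subfield of $k_3$ is $\Q(\zeta_3)$, so the unique quadratic subfield $F \subseteq k_2$ coincides with $\Q(\zeta_3)$.

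I will then apply Proposition \ref{prop:surj} to the abelian extension $K(\zeta_3)/\Q$ and the given $\tau$ to obtain $\sigma \in NF_n$ compatible with $\tau$. The explicit construction there produces $\rho_n(\sigma) = \bigl(\begin{smallmatrix} a & -a \\ 1 & 0 \end{smallmatrix}\bigr)$ for an $a \in (\Z/n\Z)^{\times}$ such that $\zeta_n \mapsto \zeta_n^a$ matches $\tau$ on $\Q(\zeta_n) \cap K(\zeta_3)$. Because $\tau$ fixes $\zeta_3$ and $\Q(\zeta_3) \subseteq \Q(\zeta_n) \cap K(\zeta_3)$ (using $3 \mid n$), we must have $a \equiv 1 \pmod 3$, so $\rho_3(\sigma) = \bigl(\begin{smallmatrix} 1 & -1 \\ 1 & 0 \end{smallmatrix}\bigr) \in \SL_2(\F_3)$.

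The main step, and the main obstacle, is to verify that $\sigma|_{k_2}$ is a $3$-cycle. Since $k_2 \subseteq k_3$ there is a surjection $\Gal(k_3/\Q) \isom \GL_2(\F_3) \twoheadrightarrow S_3 \isom \Gal(k_2/\Q)$; its kernel is the unique normal subgroup of order $8$, namely the derived subgroup $Q_8 = \SL_2(\F_3)'$, whose elements all have order dividing $4$. A direct computation yields $\rho_3(\sigma)^3 = -I$, so $\rho_3(\sigma)$ has order $6$ and thus lies outside $Q_8$. Its image in $S_3$ is therefore nontrivial, and since $\det \rho_3(\sigma) = 1$ this image lies in $\SL_2(\F_3)/Q_8 \isom A_3$, giving a $3$-cycle as required. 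Compatibility with $\tau$ on $k_{2n} \cap K(\zeta_3) = k_n \cap K(\zeta_3)$ is inherited directly from Proposition \ref{prop:surj}, completing the construction. The hypothesis $\tau(\zeta_3) = \zeta_3$ is precisely what forces $a \equiv 1 \pmod 3$ and hence places $\rho_3(\sigma)$ on the correct side of $Q_8$.
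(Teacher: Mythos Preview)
Your proof is correct and follows essentially the same approach as the paper: both reduce to constructing an element of $\Gal(k_n/\Q)$ whose $\ell=3$ component lies in $\SL_2(\F_3)\cap NF_3$ and maps to a $3$-cycle in $S_3\cong\GL_2(\F_3)/Q_8$, then verify compatibility with $\tau$ via the determinant. The only cosmetic difference is that you invoke Proposition~\ref{prop:surj} uniformly for all of $n$ (yielding $\rho_3(\sigma)=\bigl(\begin{smallmatrix}1&-1\\1&0\end{smallmatrix}\bigr)$), whereas the paper splits off the $\ell=3$ factor and uses the matrix $\bigl(\begin{smallmatrix}-1&1\\0&-1\end{smallmatrix}\bigr)$ there instead; both matrices lie in $\SL_2(\F_3)\setminus Q_8$, have no nonzero fixed vectors, and hence serve equally well.
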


\begin{proof}
By Corollary \ref{cor:k_2intk_n}, we have $3\mid n$ and $k_2\subset k_3$. The unique normal subgroup $H$ of order 8 inside $\GL_2(\F_3)\isom \Gal(k_3/\Q)$ such that $\GL_2(\F_3)/H\isom S_3$ is generated by
$$\left\langle\begin{bmatrix}0&1\\-1&0\end{bmatrix},\begin{bmatrix}-1&-1\\-1&1\end{bmatrix}\right\rangle.$$

A computation shows that the matrix
$$\begin{bmatrix}-1&1\\0&-1\end{bmatrix}$$
maps to an element of order 3 in $S_3\isom \Gal(k_2/\Q)$. Choose $\sigma'\in \Gal(k_3/\Q)$ so that $\rho(\sigma')$ is the matrix above. Then a computation shows $\sigma'\in NF_3$ and $\sigma'|_{k_2}\in NF_2$. Let $a\in (\Z/n\Z)^\times$ be such that $\zeta_n\to\zeta_n^a$ is compatible with $\tau$. Since $\zeta_3^\tau=\zeta_3$, the image of $a$ under $(\Z/n\Z)^\times\to(\Z/3\Z)^\times$ is trivial. Let $\sigma=(\sigma_1,\sigma_2)\in\Gal(k_{2n}/\Q)\isom \Gal(k_3/\Q)\times\Gal(k_{n/3}/\Q)$ be so that $\rho_{2n}(\sigma_1,\sigma_2)$ is
$$\left(\begin{bmatrix}-1&1\\0&-1\end{bmatrix},\begin{bmatrix}a&-a\\1&0\end{bmatrix}\right).$$
It is straightforward to check that $\sigma\in NF_{2n}$ and $\det(\rho(\sigma))=a$. By Lemma \ref{lem:abext}, $\sigma$ and $\tau$ are compatible.
\end{proof}

We now give the main results of this section on some local-to-global properties of torsion points of elliptic curves. We first consider the  case of a single prime $\ell > 3$, and then treat $2$ and $3$ separately. We then prove the general case of a composite integer $n$. For the remainder of the section, $K/\Q$ will denote a nontrivial Galois extension, which we sometimes additionally require to be abelian. To prove Theorem \ref{thm:arithprog}, we only need to consider the case when $K$ is a quadratic extension, but the results will hold for a general $K$.

\begin{prop}\label{torprime} Let $K/\Q$ be a nontrivial Galois extension and $\ell>3$ a prime. The following are equivalent
\begin{enumerate}
    \item $E(\F_p)[\ell]\neq0$ for all but finitely many primes $p$ that do not split completely in $K$.
    \item There exists a curve $E'$ isogenous to $E$ such that $E'(\Q)[\ell]\neq0$.
\end{enumerate}
\end{prop}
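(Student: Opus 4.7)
The plan is straightforward for $(2) \Rightarrow (1)$: any $\Q$-isogeny $E \to E'$ reduces modulo all but finitely many primes, and isogenous elliptic curves over $\F_p$ have the same point count, so $E'(\Q)[\ell] \ne 0$ forces $\ell \mid \#E(\F_p)$, giving $E(\F_p)[\ell] \ne 0$. The interesting direction is $(1) \Rightarrow (2)$, which I will prove by contrapositive: assuming no $\Q$-isogenous curve has nonzero rational $\ell$-torsion, I will produce a positive density of primes $p$ not splitting completely in $K$ with $E(\F_p)[\ell] = 0$. By Chebotarev's theorem applied to $Kk_\ell/\Q$, this reduces to finding $(\tau, \sigma) \in \Gal(Kk_\ell/\Q) \hookrightarrow \Gal(K/\Q) \times G$, where $G = \rho_\ell(\Gal(\Qbar/\Q))$, with $\tau \neq 1$ and $\sigma \in NF_\ell$, the two restricting compatibly to $K \cap k_\ell$.

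The first step is to verify that $NF_\ell$ is nonempty under our assumption. If every $\sigma \in G$ had eigenvalue $1$ on $E[\ell]$, then $\tr(\sigma) = 1 + \det(\sigma)$ for all $\sigma$, so by Brauer--Nesbitt the semisimplification of $E[\ell]$ as an $\F_\ell[G]$-module would equal $\mathbf{1} \oplus \det$. The resulting composition series forces either a trivial submodule (giving $E(\Q)[\ell] \neq 0$) or a trivial quotient (giving a $G$-stable line $L$ with $E/L$ admitting rational $\ell$-torsion), either way contradicting our assumption. Once $NF_\ell \neq \emptyset$, the case $K \not\subset k_\ell$ is immediate: pick any $\sigma \in NF_\ell$ and extend $\sigma|_{K \cap k_\ell}$ to a nontrivial $\tau \in \Gal(K/\Q)$ using $[K : K \cap k_\ell] > 1$.

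The main obstacle is the residual case $K \subset k_\ell$, in which $H := \Gal(k_\ell/K)$ is a proper normal subgroup of $G$ and I must produce $\sigma \in NF_\ell \setminus H$; equivalently, I must show $NF_\ell$ is not contained in any proper normal subgroup of $G$. I will appeal to the classification in Lemma~\ref{GaloisImage}, verifying its hypothesis where possible and handling degenerate cases by hand. For $G = \GL_2(\F_\ell)$, matrices of the form $aI$ with $a \neq 1$ and $\operatorname{diag}(a,b)$ with $a,b \neq 1$ distinct realize every determinant and hit every nontrivial coset of any normal subgroup. For Borel-type images, $NF_\ell$ consists precisely of upper-triangular matrices with both diagonal entries $\neq 1$, and these meet every nontrivial coset of the relevant normal subgroups (unipotent radical and level subgroups of the torus); Cartan and normalizer-of-Cartan cases are analogous. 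The exceptional $S_4$ image at $\ell = 5$ is handled directly, as each of the three listed generators lies in $NF_\ell$. The hypothesis $\ell > 3$ is used throughout to ensure $\F_\ell^\times$ contains enough elements distinct from $1$ to produce matrices with no eigenvalue $1$; I expect this case analysis to be the most delicate part of the argument.
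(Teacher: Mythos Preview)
Your overall structure is sound, and the Brauer--Nesbitt argument for $NF_\ell \neq \emptyset$ is a clean alternative to the paper's opening move. The case $K \not\subset k_\ell$ is handled correctly. The gap is in the case $K \subset k_\ell$: you appeal to the classification of Lemma~\ref{GaloisImage}, but you have not verified its hypothesis, namely the existence of $P \in E[\ell]$ with $\det\bigl(\rho_\ell(\Gal(k_\ell/\Q(P)))\bigr) = \F_\ell^\times$. This hypothesis does not follow from your contrapositive assumption: for instance, if $G$ is a nonsplit Cartan subgroup (as occurs for CM curves), the stabilizer of every nonzero $P$ is trivial, so the determinant condition fails, and $G$ is absent from the list in Lemma~\ref{GaloisImage}. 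Normalizers of nonsplit Cartans and exceptional images with projective image $A_4$ or $A_5$ are likewise absent. Your case analysis as written does not cover these, and the phrase ``handling degenerate cases by hand'' does not indicate how.

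The paper avoids this by arguing directly rather than by contrapositive. Assuming (1), it applies Chebotarev to produce a prime $p$ whose Frobenius generates $\Gal(\Q(\zeta_\ell)/\Q)$ and acts nontrivially on $K$; since $p$ does not split completely in $K$, hypothesis (1) gives $E(\F_p)[\ell] \neq 0$, so some $P \in E[\ell]$ is fixed by $\Frob_p$, and then $\Q(P) \cap \Q(\zeta_\ell) = \Q$ follows because no nontrivial subfield of $\Q(\zeta_\ell)$ embeds in $\Q_p$. This is exactly the hypothesis of Lemma~\ref{GaloisImage}. With $G$ so constrained, the paper checks that $J = \{g \in G : \det(g - I) \neq 0\}$ generates $G$ in every listed case except (4) with $d = \ell$, which is precisely the Borel case yielding a $\Q$-rational $\ell$-isogeny to a curve with rational $\ell$-torsion. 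Your approach can be repaired by running the same check across the full Dickson classification of subgroups of $\GL_2(\F_\ell)$ with surjective determinant, but that is more than you have sketched; the paper's use of (1) to cut down $G$ beforehand is the more economical route.
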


\begin{proof}
The implication (2)$\implies$(1) is immediate. Assume (1) is true. Let $\sigma\in\Gal(K(\zeta_\ell)/\Q)$ be such that $\sigma|_{\Q(\zeta_\ell)}$ generates $\Gal(\Q(\zeta_\ell)/\Q)$ and $\sigma$ does not act as the identity on $K$. Let $p$ be a prime of good reduction for $E$ such that $\sigma$ is in the conjugacy class of $\Frob_p$ and $E(\F_p)[\ell]\neq 0$. Let $P\in E[\ell]$ so that the reduction mod $p$, $\red_p(P)\in E(\F_p)[\ell]$. Hence there is an embedding $\Q(P)\injects \Q_p$. Since the only subfield of $\Q(\zeta_\ell)$ with an embedding into $\Q_p$ is $\Q$, it follows that $\Q(P)\cap\Q(\zeta_\ell)=\Q$. Hence the image of $\rho_\ell$ is one of the groups listed in Lemma \ref{GaloisImage}. Suppose that we are not in the case of group \hyperref[GaloisImages]{(4)} with $d=\ell$. Then, since $\ell>3$, one can check that $G$ is generated by the subset
$$J:=\{g\in G\mid \det(g-1)\neq0\}.$$
In particular, there must be some element $g\in J$ such that $g=\rho(\tau)$ for some $\tau\in\Gal(k_\ell K/\Q)$ which acts nontrivially on $K$. Moreover $\tau|_{k_\ell}\in NF_\ell$ since $\rho(\tau)\in J$. Let $p\neq\ell$ be any prime of good reduction such that $\tau$ is in the conjugacy class of $\Frob_p$. Then $\Frob_p$ does not fix any nontrivial $\ell$-torsion point in $E_p(\overline{\F_p})[\ell]$, so $E_p(\F_p)[\ell]=\{O\}$ and $p$ does not split completely in $K$ . Since there are infinitely many such primes $p$, we get a contradiction. That is, if (1) holds, then $G$ must be a Borel subgroup of the form \hyperref[GaloisImages]{(4)}.

Now suppose we are in the case of \hyperref[GaloisImages]{(4)} in Lemma \ref{GaloisImage} with $d=\ell$. Let $P,Q\in E[\ell]$ be the basis giving the matrix representation of \hyperref[GaloisImages]{(4)}. The subgroup generated by $P$ is Galois invariant, so there exists an elliptic curve $E'$ and an isogeny $\varphi\colon E\to E'$ whose kernel is the subgroup generated by $P$. Then $\varphi(Q)$ is fixed by $G_\Q$, so defines a rational $\ell$-torsion point on the isogenous curve $E'$.
\end{proof}

Proposition \ref{torprime} does not hold if we replace $\ell$ by 2 or 3. However, we can determine precisely when it fails as the next two propositions show.

\begin{prop}\label{prop:twotorsion}
Let $K/\Q$ be a nontrivial Galois extension. Suppose that for all but finitely many primes $p$ such that $p$ does not split completely in $K$, we have $E(\F_p)[2]\neq0$. Then either $E[2](\Q)\neq 0$ or $\Gal(k_2/\Q)=S_3$ and $K
\subset k_2$ has degree 2 over $\Q$.
\end{prop}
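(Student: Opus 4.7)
I would argue the contrapositive: assume $E[2](\Q)=0$, and show that condition (1) forces $\Gal(k_2/\Q)=S_3$ together with $K$ being the unique quadratic subfield of $k_2$. The whole argument is driven by Chebotarev's density theorem combined with a case analysis for the image of $\rho_2\colon \Gal(k_2/\Q)\injects S_3 \isom \GL_2(\F_2)$, where $S_3$ acts naturally on the three nonzero elements of $E[2]$.

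The hypothesis $E[2](\Q)=0$ says $\Gal(k_2/\Q)$ has no common fixed point on $E[2]\setminus\{0\}$, which immediately restricts it to $A_3$ or $S_3$, since any subgroup of order at most $2$ fixes at least one of the three nonzero $2$-torsion points. In either case I set $L=K\cap k_2$, which is Galois over $\Q$, and observe that $\Gal(k_2/L)$ is a normal subgroup of $\Gal(k_2/\Q)$. In the case $\Gal(k_2/\Q)=A_3$, every nontrivial element lies in $NF_2$, and $L\in\{\Q,k_2\}$: if $L=\Q$, Chebotarev applied to $Kk_2$ produces $\sigma$ with $\sigma|_{k_2}\in NF_2$ and $\sigma|_K$ nontrivial; if $L=k_2\subseteq K$, I simply lift a nontrivial element of $\Gal(k_2/\Q)$ to $\Gal(K/\Q)$. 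Either way I obtain infinitely many primes $p$ with $E(\F_p)[2]=0$ that do not split completely in $K$, contradicting (1).

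In the case $\Gal(k_2/\Q)=S_3$, the normal subgroups are $\{1\},A_3,S_3$, so $L\in\{\Q,F,k_2\}$ where $F\subset k_2$ is the unique quadratic subfield. Crucially, $NF_2$ now consists of the two $3$-cycles, both of which restrict to the identity on $F$. The cases $L=\Q$ and $L=k_2$ are ruled out exactly as in the previous paragraph. If $L=F$ and $K\supsetneq F$, I identify $\Gal(Kk_2/\Q)$ with the fiber product $\Gal(K/\Q)\times_{\Gal(F/\Q)}\Gal(k_2/\Q)$, pick a $3$-cycle $\rho$ on $k_2$ (forcing $\rho|_F=\id$), and pair it with any nontrivial $\tau\in\Gal(K/F)$, which exists since $K\neq F$. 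The resulting $\sigma=(\tau,\rho)$ lies in $NF_2$ on $k_2$ and acts nontrivially on $K$, so Chebotarev again contradicts (1). The only remaining possibility is $K=F$, as desired.

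The main subtlety is that the argument must produce contradictions in all unwanted cases while leaving the case $\Gal(k_2/\Q)=S_3$ with $K=F$ genuinely compatible with (1). This is what makes the distinction between $L=F$ with $K=F$ versus $L=F$ with $K\supsetneq F$ essential: in the former, every element of $NF_2$ restricts to the identity on $K=F$, so the corresponding primes split completely in $K$ and do not contradict (1), while in the latter the extra freedom in the fiber product supplies a Frobenius witness that does. Keeping this bookkeeping precise, rather than any deep computation, is the real content of the proof.
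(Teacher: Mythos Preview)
Your proposal is correct and follows essentially the same approach as the paper: Chebotarev's density theorem combined with a case analysis on $\Gal(k_2/\Q)\in\{A_3,S_3\}$ and on $L=K\cap k_2$. Your treatment is in fact slightly more thorough, since you explicitly rule out the subcase $\Gal(k_2/\Q)=S_3$, $L$ quadratic, $K\supsetneq L$ via the fiber-product argument, whereas the paper's proof stops at the conclusion that $L/\Q$ has degree $2$.
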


\begin{proof}
 Let $y^2 = f(x)$ be the short Weierstrass equation for $E$ over $\Q$, and suppose that $E[2](\Q)= 0$. Let $L=K\cap k_2$. It suffices to show there exists an element in $\Gal(k_2/\Q)$ which fixes no 2-torsion point and does not fix $K$, since otherwise there are infinitely many primes $p$ whose lift of the Frobenius equals this element giving a contradiction. If the splitting field of $f$ is a cyclic degree 3 extension, then $\Gal(k_2/\Q)$ is also cyclic. If $L=\Q$, $\Gal(k_2/\Q)=\Gal(K/\Q)\times\Gal(k_2/\Q)$ so choosing nontrivial elements in each component produces a Galois action which fixes no 2-torsion point and does not fix $K$. If $L=E[2]$, then we get the generator of $\Gal(k_2/\Q)$ does the job as well.

Now assume $\Gal(k_2/\Q)=S_3$.  If $L=\Q$, then the same argument as above applies. If $L/\Q$ has degree 3, Then the generator of $\Gal(L/\Q)$ fixes no 2-torsion point and does not fix $K$. If $L=k_2$, the same argument applies by choosing any element of order 3 in $\Gal(L/\Q)$. Hence $L/\Q$ must be of degree 2.
\end{proof}

\begin{prop}\label{prop:threetorsion}
Let $K/\Q$ be a nontrivial Galois extension. Suppose that for all but finitely many primes $p$ such that $p$ does not split completely in $K$, we have $E(\F_p)[3]\neq0$. Then either $E[3](\Q)\neq0$ or $K=\Q(\sqrt{-3})$.
\end{prop}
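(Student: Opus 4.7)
The plan is to mirror the proof of Proposition~\ref{prop:twotorsion}. Set $G := \im \rho_3 \subset \GL_2(\F_3)$ and $J := \{g \in G : g \text{ has a nonzero fixed vector in } E[3]\}$. Since $\det \circ \rho_3 = \chi_3$ is surjective, $\det G = \F_3^\times$ and $\Q(\sqrt{-3}) = \Q(\zeta_3) \subset k_3$. By Chebotarev, hypothesis (1) is equivalent to asserting that every $\sigma \in \Gal(k_3 K / \Q)$ with $\sigma|_K \neq \id_K$ satisfies $\sigma|_{k_3} \in J$. Assume for contradiction that $E[3](\Q) = 0$ (so $G$ has no common fixed vector) and $K \neq \Q(\sqrt{-3})$; the goal is to exhibit $\sigma \in \Gal(k_3 K / \Q)$ with $\sigma|_K \neq \id_K$ and $\sigma|_{k_3} \in G \setminus J$, contradicting (1).

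Elements of $\GL_2(\F_3) \setminus J$ are characterized by $\det(g - I) \neq 0$; concretely these are $-I$ together with all elements of orders $4$, $6$, and $8$. When $G \setminus J$ is nonempty, pick $g \in G \setminus J$ and set $L := K \cap k_3$. If $L = \Q$, then $\Gal(k_3 K / \Q) \cong G \times \Gal(K/\Q)$ and $\sigma := (g, \tau)$ for any nontrivial $\tau \in \Gal(K / \Q)$ suffices. If $L \neq \Q$, I would enumerate the possible intermediate fields $\Q \subsetneq L \subseteq k_3$ via the subgroup lattice of $G$, and verify that, provided $K \neq \Q(\sqrt{-3})$, some $g \in G \setminus J$ admits a lift $\sigma \in \Gal(k_3 K / \Q)$ with $\sigma|_K \neq \id_K$. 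The key observation is that the sole subfield of $k_3$ which can obstruct every such lift is the fixed field of $\ker(\chi_3)$, namely $\Q(\sqrt{-3})$.

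The principal obstacle is the case $G = J$: every element of $G$ has eigenvalue $1$ on $E[3]$. Using that the only elements of $\GL_2(\F_3)$ with eigenvalue $1$ are the identity, unipotents of order $3$, and involutions of trace $0$ (eigenvalues $\{1, -1\}$), combined with $\det G = \F_3^\times$ and the absence of a common fixed vector, a direct subgroup analysis forces $G$ to be conjugate to the order-$6$ subgroup of the Borel with diagonal characters $(\chi_3, 1)$. In this configuration $E$ admits a rational $3$-isogeny $E \to E/C$ with $C \cong \mu_3$ as a $\Q$-group scheme and $E/C$ carrying a rational $3$-torsion point; since the Chebotarev argument yields no direct contradiction when $G = J$, the conclusion must be extracted from this arithmetic structure, using that $\Q(\sqrt{-3})$ is precisely the quadratic field over which $\mu_3$ becomes constant and pointwise rational $3$-torsion emerges on $E$.
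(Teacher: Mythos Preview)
Your overall strategy matches the paper's: translate the hypothesis via Chebotarev into a constraint on $\Gal(k_3 K/\Q)$, then analyze $G = \rho_3(G_\Q)$. Where you sketch an enumeration of intermediate fields $L = K \cap k_3$, the paper is more uniform: it reduces to $K \subseteq k_3$, lets $H \trianglelefteq G$ be the normal closure of your set $G \setminus J$, observes that necessarily $K \subseteq k_3^H$, and then verifies for each of its listed possibilities for $G$ (full $\GL_2$, full Borel, split Cartan, normalizer of split Cartan) that $H = G \cap \SL_2(\F_3)$, whence $k_3^H = \Q(\zeta_3)=\Q(\sqrt{-3})$.

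You are right to flag the case $G = J$, and your deduction that $G$ must then be conjugate to the order-$6$ group $\bigl\{\bigl(\begin{smallmatrix} a & * \\ 0 & 1 \end{smallmatrix}\bigr) : a \in \F_3^\times\bigr\}$ is correct. But your final paragraph does not establish the stated conclusion: you only obtain a rational $3$-torsion point on the \emph{isogenous} curve $E/C$, and the remark about $\mu_3$ over $\Q(\sqrt{-3})$ says nothing about $K$. This gap is real. For any $E$ with this mod-$3$ image (equivalently, $E$ admits a rational subgroup $C \cong \mu_3$ while $E[3](\Q)=0$; such curves exist, for instance in isogeny class $19a$), every Frobenius has eigenvalue $1$ on $E[3]$, so the hypothesis holds for \emph{every} nontrivial Galois extension $K/\Q$, yet $E[3](\Q) = 0$; taking $K = \Q(i)$ contradicts the proposition as written. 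The paper's own proof overlooks the same case: the reduction to $K \subseteq k_3$ tacitly uses $G \setminus J \neq \emptyset$, and the subsequent case list omits Lemma~\ref{GaloisImage}(4) with $d = \ell = 3$. The intended conclusion is presumably ``$E'[3](\Q) \neq 0$ for some $E'$ $\Q$-isogenous to $E$,'' consistent with Proposition~\ref{torprime} and Theorem~\ref{thm:torcomposite}; under that amendment your observation about $E/C$ completes the argument.
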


\begin{proof} Let $L=K\cap k_3$. As in the proof of Proposition \ref{prop:twotorsion}, if $K\neq L$ or $L=\Q$, then we get a contradiction. Hence assume $\Q\neq K\subset k_3$. Define
$$J:=\{g\in G\mid \det(g-1)\neq0\}$$
and let $H$ be the normal subgroup generated by $J$. Then $K\subseteq k_3^H$. By the same argument as in the beginning of Proposition \ref{torprime}, the image of $\rho$ is one of the non-exceptional subgroups in Lemma \ref{GaloisImage}. In the case $G=\GL_2(\F_3)$, it is straightforward to check that $G$ is generated by $J$, so $G=H$ which is a contradiction.

Since $\ell=3$, there are only three cases left to consider:
\setcounter{case}{0}
\begin{case}[{\tt B}]
$G$ is of the form
$$\left\{\begin{bmatrix}x&*\\0&y\end{bmatrix}\middle| \, x\in \F_3^\times, \, y\in \F_3^\times\right\}.$$
The set $J$ contains
$$\begin{bmatrix}-1&0\\0&-1\end{bmatrix},\begin{bmatrix}-1&1\\0&-1\end{bmatrix},\begin{bmatrix}-1&-1\\0&-1\end{bmatrix}$$
the above elements generate the index 2 subgroup $H=\{g\in G\mid \det(g)=1\}$. Hence $K\subset k_3^H=\Q(\sqrt{-3})$, so we must have $K=\Q(\sqrt{-3})$.
\end{case}

\begin{case}[{\tt Cs}]
$G$ is of the form
$$\left\{\begin{bmatrix}x&0\\0&y\end{bmatrix}\middle| \, x\in \F_3^\times,\, y\in \F_3^\times\right\}.$$
The set $J$ contains
$$\begin{bmatrix}-1&0\\0&-1\end{bmatrix}.$$
which again generates the index 2 subgroup $H=\{g\in G\mid \det(g)=1\}$, so $K=\Q(\sqrt{-3})$.
\end{case}

\begin{case}[{\tt Ns}]
$G$ is the normalizer of a split Cartan. The set $J$ contains
$$\begin{bmatrix}0&1\\-1&0\end{bmatrix}.$$
which generates a cyclic normal subgroup of order 4. One checks that this subgroup is precisely $H=\{g\in G\mid \det(g)=1\}$, so $K=\Q(\sqrt{-3})$.
\qedhere
\end{case} 

\end{proof}
\setcounter{case}{0}

We now prove the general case for a composite integer $n$, which is used to prove Theorem \ref{thm:arithprog}. We require additional assumptions on $K$ if $2$ or $3$ divides $n$, as demonstrated by the previous two propositions.

\begin{thm}\label{thm:torcomposite}
Let $K/\Q$ be a nontrivial abelian extension not contained in $k_2$ or $\Q(\zeta_3)$. Let $n>1$ be an integer. Suppose that for all except possibly one odd prime $\ell\mid n$, we have $\rho_{\ell}$ is surjective. If $\rho_{\ell}$ is not surjective and $2\ell \mid n$, assume $k_2\cap k_{\ell}=\Q$. Then the following are equivalent:
\begin{enumerate}
    \item $E(\F_p)[n]\neq0$ for all but finitely many primes $p$ that do not split completely in $K$.
    \item There exists a curve $E'$ isogenous to $E$ such that $E'(\Q)[n]\neq0$.
\end{enumerate}
\end{thm}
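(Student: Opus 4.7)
The plan is to prove the two implications separately. The direction $(2) \Rightarrow (1)$ is immediate: $\Q$-isogenous elliptic curves satisfy $\#E(\F_p) = \#E'(\F_p)$ at primes $p$ of good reduction, and $G[n] \neq 0$ is equivalent to $\gcd(n, |G|) > 1$ for a finite abelian group $G$. Since $E'(\Q)[n] \neq 0$ reduces to $E'(\F_p)[n] \neq 0$ at almost all $p$ via injectivity of reduction on prime-to-$p$ torsion, the claim follows.

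For $(1) \Rightarrow (2)$ I would argue the contrapositive: assume no $\Q$-isogenous $E'$ has $E'(\Q)[\ell] \neq 0$ for any prime $\ell \mid n$. First I reduce to $n$ squarefree, which is harmless since $E(\F_p)[n] \neq 0$ depends only on $\mathrm{rad}(n)$. The objective becomes the construction of a single $\sigma \in \Gal(\overline{\Q}/\Q)$ acting nontrivially on $K$ and with $\sigma|_{k_\ell} \in NF_\ell$ for every prime $\ell \mid n$. By Chebotarev, infinitely many primes $p$ will have Frobenius conjugate to $\sigma$, and each such $p$ does not split completely in $K$ and satisfies $E(\F_p)[n] = 0$, contradicting $(1)$.

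Constructing $\sigma$ is the heart of the argument. I would let $m$ be the product of the odd primes $\ell \mid n$ with $\rho_\ell$ surjective; Corollary \ref{cor:surj} upgrades this to surjectivity of $\rho_m$, and Proposition \ref{prop:surj} produces $\sigma_m \in NF_m$ compatible with any prescribed $\tau \in \Gal(K/\Q)$, which I choose nontrivial on $K$ and, if $2 \mid n$, additionally fixing $\zeta_3$ (possible since $K \not\subset \Q(\zeta_3)$). The $2$-component is handled by case analysis on the Galois image of $\rho_2$ (forced into $\{A_3, S_3\}$ by the contrapositive, since $E(\Q)[2] = 0$) and on whether $k_2 \subset k_n K$; the delicate subcase is $\rho_2$ surjective with $k_2 \subset k_n K$, where Corollary \ref{cor:k_2intk_n} forces $3 \mid n$ and $k_2 \subset k_3$, so that Corollary \ref{cor:nonfix2n} packages the $2$-component into $\sigma$ directly. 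The remaining $2$-subcases are resolved either by splicing in any nontrivial element of $\Gal(k_2/\Q)$ (in the $A_3$ case every such element lies in $NF_2$) or by matching on the unique quadratic subfield of $k_2$ (using $K \not\subset k_2$ to ensure room for a nontrivial $\tau$). Finally, if an exceptional odd prime $\ell_0$ with $\rho_{\ell_0}$ non-surjective appears, Proposition \ref{torprime} supplies $\sigma_{\ell_0} \in NF_{\ell_0}$ nontrivial on $K$, and the hypothesis $k_2 \cap k_{\ell_0} = \Q$ (when $2\ell_0 \mid n$) together with Lemma \ref{lem:abext} controlling $k_m \cap k_{\ell_0} \subseteq \Q(\zeta_m \zeta_{\ell_0})$ allow $\sigma_{\ell_0}$ to be merged with the rest.

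The main obstacle will be the interaction around $\ell = 2$ and $\ell = 3$: the possible containment $k_2 \subset k_3$ together with the unique quadratic subfield of $k_2$ when $\Gal(k_2/\Q) \cong S_3$ create compatibility constraints entangling $\sigma|_{k_2}$ with both $\sigma|_K$ and $\sigma|_{\Q(\zeta_3)}$. The hypotheses $K \not\subset k_2$ and $K \not\subset \Q(\zeta_3)$ in the theorem exist precisely to exclude the exceptional configurations already identified in Propositions \ref{prop:twotorsion} and \ref{prop:threetorsion}, where the local-to-global principle genuinely fails even under the contrapositive assumption.
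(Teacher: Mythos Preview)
Your overall architecture matches the paper's: reduce to squarefree $n$, build a single $\sigma \in \Gal(k_nK/\Q)$ with $\sigma|_{k_n} \in NF_n$ and $\sigma|_K \neq \id$, then invoke Chebotarev. Your treatment of the product $m$ of surjective odd primes via Corollary~\ref{cor:surj} and Proposition~\ref{prop:surj}, and of the prime $2$ via the casework feeding into Corollaries~\ref{cor:k_2intk_n} and~\ref{cor:nonfix2n}, is essentially what the paper does in its Case~1.

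The genuine gap is at the non-surjective odd prime $\ell_0$. You cannot invoke Proposition~\ref{torprime}: its proof assumes condition (1) for the single prime $\ell_0$, but from (1) for $n$ you only know $E(\F_p)[n]\neq 0$, which says nothing about $\ell_0$-torsion specifically. The paper bridges this with an embedded lemma: it first constructs $\sigma \in NF_{n/\ell_0}$ nontrivial on $K$ \emph{and} generating $\Gal(\Q(\zeta_{\ell_0})/\Q)$; for primes $p$ with $\Frob_p$ conjugate to $\sigma$, hypothesis (1) then forces $E(\F_p)[\ell_0]\neq 0$, and the determinant condition places the image of $\rho_{\ell_0}$ on the list of Lemma~\ref{GaloisImage}. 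Your merging step is likewise not justified: Lemma~\ref{lem:abext} controls only \emph{abelian} subextensions of $k_m$, whereas $k_m\cap k_{\ell_0}$ need not be abelian (the image of $\rho_{\ell_0}$ can be a Cartan normalizer or the exceptional $S_4$, both with non-abelian quotients). The paper instead passes to the maximal abelian subextension $k_{\ell_0}'\subset k_{\ell_0}$, identifies a subset $N\subset \Gal(k_{\ell_0}'/\Q)\cong \Z/m\Z\times\Z/(\ell_0-1)\Z$ all of whose lifts lie in $NF_{\ell_0}$ and which generates when $\ell_0>3$, and arranges compatibility through this abelian layer; the exceptional case $\ell_0=5$ with projective image $S_4$ is handled by a separate explicit matrix computation (including determining $k_3\cap k_5'$ and $k_2\cap k_3k_5'$) that your plan does not address.
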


\begin{proof}
As before, $(2)\implies (1)$ is clear. Now assume that $(1)$ holds. Note that it suffices to consider when $n$ is a product of distinct primes. Furthermore, if there exists $\tau\in\Gal(k_nK/\Q)$ such that $\tau|_{k_n}\in NF_n$ and $K^\tau\neq K$, then for any prime $p$ of good reduction such that $\Frob_p$ is in the conjugacy class of $\tau$, we have $E_p(\F_p)[n]=0$ and $p$ does not split completely in $K$. Our proof will show that either (2) is true, or there exists such an element $\tau\in\Gal(k_nK/\Q)$, thereby contradicting $(1)$.

\begin{case}[$\rho_\ell$ is surjective for all odd $\ell$]
If $n$ is odd, then by Proposition \ref{prop:surj}, there exists $\tau\in\rho(\Gal(k_nK/\Q))$ such that $\tau|_{k_n}\in NF_n$ and $K^\tau\neq K$.

Now assume $n=2m$. If $m=1$, then our result follows from Proposition \ref{prop:twotorsion}, so assume $m>1$. Since $E(\Q)[2]=0$, we have $\Gal(k_2/\Q)\isom \Z/3\Z$ or $S_3$. Hence,
$\Gal(k_2\cap k_{m}/\Q)\isom \{1\},\Z/2\Z,\Z/3\Z,$ or $S_3$. Let $L:=k_2\cap k_{m}K$.

{\it If $L/\Q$ is abelian:} By hypothesis on $K$, there exists $s\in \Gal(LK/\Q)$ such that $K^\sigma\neq K$, and $L^\sigma=L$ if $L/\Q$ is at most quadratic or $L^\sigma=\Q$ if $L/\Q$ is cubic. By Proposition \ref{prop:surj}, there exists $\sigma\in\rho(\Gal(k_{m}LK/\Q))$ such that $\sigma_{k_m}\in NF_m$ and $\sigma$ restricts to $s$. Then there exists a lift $\sigma$ to $\tau\in \Gal(k_nK/\Q)$ that satisfies $\tau|_{k_n}\in NF_n$.

{\it If $L/\Q$ is $S_3$:}
By Corollary \ref{cor:k_2intk_n}, we have $3\mid m$ and $k_2\subset k_3$. By Lemma \ref{lem:abext}, the only quadratic extension contained in $k_3$, and hence in $k_2$, is $\Q(\zeta_3)$. Since $K\not\subset k_2$ by hypothesis, this implies $K\neq\Q(\zeta_3)$. Hence, Corollary \ref{cor:nonfix2n} shows there exists $\tau\in\Gal(k_{n}K/\Q)$ such that $\tau|_{k_{n}}\in NF_{n}$ and $K^\tau\neq K$.
\end{case}

\begin{case}[$\rho_\ell$ is not surjective for some $\ell$]
Let $\ell\mid n$ be a prime such that $\rho_\ell$ is not surjective.
\begin{lem}
The image of $\rho_\ell$ is given by one of the groups in Lemma \ref{GaloisImage}
\end{lem}
\begin{proof} Assume first that $2\nmid n$. By Proposition \ref{prop:surj}, there exists $\sigma\in \Gal(k_{n/\ell}K(\zeta_\ell)/\Q)$ such that $\sigma \in NF_{n/\ell}$ and $K^\sigma\neq K$, $\Q(\zeta_\ell)^\sigma=\Q$. Then for any prime $p$ of good reduction such that $\Frob_p$ is in the conjugacy class of $\sigma$, we have $E_p(\F_p)[n/\ell]=0$ and $p$ does not split completely in $K$. Hence by (1), $E_p(\F_p)$ contains an $\ell$-torsion point, i.e., $\sigma$ fixes an $\ell$-torsion point on $E$. Since $\sigma$ also generates $\Gal(\Q(\zeta_\ell)/\Q)$, the hypothesis of Lemma \ref{GaloisImage} are satisfied.

Now assume $n=2\ell m$ for some integer $m$ where $\ell$ again is an odd prime such that $\rho_\ell$ is not surjective. Set $L:=k_2\cap k_mK(\zeta_\ell)$ and recall that
$\Gal(L/\Q)\isom \{1\},\Z/2\Z,\Z/3\Z,$ or $S_3$.

{\it If $L/\Q$ is abelian:} By hypothesis on $K$, there exists $s\in \Gal(LK(\zeta_\ell)/\Q)$ such that $K^s\neq K$ and $=\Q(\zeta_\ell)^s=\Q$ and $L^s=L$ if $L/\Q$ is quadratic or $L^s=\Q$ if $L/\Q$ is cubic. By Proposition \ref{prop:surj}, there exists $\sigma\in\Gal(k_{m}KL(\zeta_\ell)/\Q)$ such that $\sigma|_{k_{m}}\in NF_{m}$ and $\sigma$ restricts to $s$. By construction, any lift of $\sigma$ to $\tau\in \Gal(k_{2m}K(\zeta_\ell)/\Q)$ satisfies $\tau|_{k_2}\in NF_2$.

{\it If $L/\Q$ is $S_3$:} By Corollary \ref{cor:k_2intk_n}, we have $3\mid m$ and $k_2\subset k_3$. By Lemma \ref{lem:abext}, the only quadratic extension contained in $k_3$, and hence in $k_2$, is $\Q(\zeta_3)$. Recall $K\cap k_2=\Q(\zeta_\ell)\cap k_2=\Q$ by hypothesis. Hence any $s\in\Gal(K(\zeta_\ell)/\Q)$ such that $K^s=\Q(\zeta_\ell)^s=\Q$ can be lifted to fix $\zeta_3$.  Hence, Corollary \ref{cor:nonfix2n} shows there exists $\tau\in\Gal(k_{2m}K(\zeta_\ell)/\Q)$ such that $\tau|_{k_{2m}}\in NF_{2m}$ and $K^\tau\neq K$ and $\Q(\zeta_\ell)^\tau=\Q$.

In either case, the claim is proved by the same argument as in the case $2\nmid n$ using $\tau$ to produce primes $p$ with $\ell$-torsion in the reduction mod $p$.
\end{proof}
\end{case}

Now we resume the proof of Theorem \ref{thm:torcomposite}. We consider two cases based on whether the image of $\rho_\ell$ is exceptional or not.

\begin{subcase}[The image of $\rho_\ell$ is not exceptional] Let $k_\ell'$ be the maximal abelian subextension of $k_\ell$. We can identify under $\rho_\ell$,
$$\Gal(k_{\ell}'/\Q)\isom \Z/m\Z\times \Z/(\ell-1)\Z$$
where $1 \leq m\leq\ell-1$. For \hyperref[GaloisImages]{(1)}$-$\hyperref[GaloisImages]{(4)}, this is clear. In the case of \hyperref[GaloisImages]{(5)}, $m=2$ and the Galois group is isomorphic to the quotient by the normal subgroup
$$\left\{\begin{bmatrix} x&0\\0&x^{-1}\end{bmatrix}\middle| x\in \F_\ell^\times\right\}.$$
The only case in which $m=1$ is possible is for \hyperref[GaloisImages]{(4)}, and $\rho_\ell(\Gal(k_\ell/\Q))$ is conjugate to
$$\left\{\begin{bmatrix}x&*\\0&1\end{bmatrix}\middle|\, x\in \F_\ell^\times\right\}.$$
Then $E$ will have an isogeny of degree $\ell$ onto an elliptic curve $E'$ having an $\ell$ torsion point, giving (2).

Now assume $m>1$. Let $N\subset\Gal(k_{\ell}'/\Q)\isom \Z/m\Z\times \Z/(\ell-1)\Z$ be the subset consisting of elements of the form $(a,b)$ where $a,b\neq0$. Then any lift of an element $g\in N$ to $\Gal(k_\ell/\Q)$ lies in $NF_\ell$. Again this is clear for \hyperref[GaloisImages]{(1)}$-$\hyperref[GaloisImages]{(4)}, and in the case of \hyperref[GaloisImages]{(5)}, the lift consists of matrices of the form
$$\left\{\begin{bmatrix} 0&x\\y&0\end{bmatrix}\middle| x,y\in \F_\ell^\times, xy\neq1 \right\}$$
which does not have any nontrivial eigenvector. Note that if $\ell>3$, then $N$ generates the entire group $\Gal(k_\ell'/\Q)$. On the other hand, if $\ell=3$, then $K\not\subset k_\ell$. Hence there exists $\sigma\in \Gal(k_\ell'K/\Q)$ such that $\sigma|_{k_\ell'}\in N$ and $K^\sigma=\Q$.

If $2\nmid n$, then by Proposition \ref{prop:surj}, there exists $\tau'\in\Gal(k_{n/\ell}k_\ell'K/\Q)$ such that $\tau'|_{k_{n/\ell}}\in NF_{n/\ell}$ and $\tau$ restricts to $\sigma$. Any lift $\tau\in\Gal(k_nK/\Q)$ will then satisfy $\tau\in NF_n$ and $K^\tau\neq K$.

Now suppose $n=2\ell m$. Let $M:=k_\ell'k_mK$ and $L:=k_2\cap M$.

{\it If $L/\Q$ is abelian:} 
Choose $u\in\Gal(L/\Q)$ to be the identity if $L/\Q$ is quadratic or a generator if $L/\Q$ is cubic. By the hypothesis $k_2\cap k_\ell=\Q$, we have $\Gal(Lk_\ell'/\Q)\isom \Gal(L/\Q)\times \Gal(k_\ell'/\Q)$. Moreover if $K\subset Lk_\ell'$, then the assumption $K\cap k_2=\Q$ and the fact that $N$ generates $\Gal(k_\ell'/\Q)$ means there exists some $\sigma_1=(s,t)\in \Gal(L/\Q)\times\Gal(k_\ell'/\Q)$ such that $s=u$, $t\in N$, and $K^{(s,t)}\neq K$. By Proposition \ref{prop:surj}, there exists $\sigma_2\in\Gal(k_{m}k_\ell'K/\Q)$ such that $\sigma_2$ is compatible with $\sigma_1$. Let $\sigma_3\in\Gal(k_{n/\ell}k_\ell'K/\Q)$ be a lift of $\sigma_2$ so that $\sigma_3|_{k_2}$ has order 3, i.e., contained in $NF_2$ (this is possible since $k_2\cap k_{m}k_\ell'K=L$). Finally any lift $\tau\in\Gal(k_{n}K/\Q)$ of $\sigma_3$ satisfies $\tau\in NF_n$ and $K^\tau\neq K$.

{\it If $L/\Q$ is $S_3$:} 
By Corollary \ref{cor:k_2intk_n}, we have $3\mid m$ and $k_2\subset k_3$. The hypothesis $k_2\cap k_\ell=\Q$ implies any $s\in\Gal(k_\ell'K/\Q)$ with $s|_{k_\ell'}\in N$ and $K^s=\Q$ is compatible with $1\in \Gal(\Q(\zeta_3)/\Q)$. Hence, Corollary \ref{cor:nonfix2n} implies there exists $\sigma\in\Gal(k_{n/\ell}k_\ell'K/\Q)$ with $\sigma\in NF_{n/\ell}$ and $K^\tau=\Q$. Then any lift $\tau\in\Gal(k_{n}K/\Q)$ of $\sigma$ satisfies $\tau\in NF_\ell$.
\end{subcase}

\begin{subcase}[Exceptional image] Now assume that $\ell=5$ and the image is exceptional as in \hyperref[GaloisImages]{(6)} of Lemma \ref{GaloisImage}. Recall that $\rho_5(G_\Q)$ generated by
$$A=\begin{bmatrix} 0&3\\3&4\end{bmatrix},B=\begin{bmatrix} 2&0\\0&2\end{bmatrix},C=\begin{bmatrix} 3&0\\4&4\end{bmatrix}$$
Let $k_5'\subset k_5$ be the subextension corresponding to the image of $\rho_5$ in $\PGL_2(\F_5)$. Then $\Gal(k_5'/\Q)\isom S_4$. The possible quotients of $S_4$ are $S_4,S_3,\Z/2\Z,\{0\}$. We will prove the desired result under the assumption that $6\mid n$, which is the most difficult case. If $2$ and/or $3$ does not divide $n$, the argument is easier, and follows the same pattern as the case $6\mid n$, so we do not include it here. By assumption $\rho_3(G_\Q)\isom \GL_2(\F_3)$ and the possible quotients are $\GL_2(\F_3),\PGL_2(\F_3),S_3,\Z/2\Z,\{0\}$. Recall that $\PGL_2(\F_3)\isom S_4$, so the possiblities for $\Gal(k_5'\cap k_3)$ are (1) $S_4\isom \PGL_2(\F_3)$, (2) $S_3$, (3) $\Z/2\Z$, (4) $\{0\}$.

We claim that $k_3\cap k_5'=\Q$. Note that $\Q(\zeta_5+\zeta^{-1}_5)\subset k_5'$ is the unique quadratic field inside $k_5'$ and $\Q(\zeta_3)\subset k_3$ is the unique quadratic field inside $k_3$. But by the discussion above, if $k_3\cap k_5'\neq\Q$, then there exists a quadratic field $J\subset k_3\cap k_5'$, which is a contradiction.

Next we claim $k_2\cap k_3k_5'=\Q,\,\Q(\zeta_3),\,\Q(\zeta_3(\zeta_5+\zeta_5^{-1})),$ or $k_2\subset k_3$ with $\Gal(k_2/\Q)\isom S_3$. Let $J=k_2\cap k_3k_5'$ defining a normal subgroup $H\subseteq \Gal(k_3/\Q)\times \Gal(k_5'/\Q)$. Suppose first $\Gal(k_2/\Q)\isom S_3$. If $H$ is of index 2, then $J\subset \Q(\zeta_3, \zeta_5+\zeta_5^{-1})$ and $k_2\cap k_\ell'=\Q$ imply $J=\Q(\zeta_3)$ or $\Q(\zeta_3(\zeta_5+\zeta_5^{-1}))$. If $k_2$ is not contained in $k_3$, then $H\cap \Gal(k_3/\Q)\times \{1\}$ is index 2 inside $\Gal(k_3/\Q)$. Then $H\cap \{1\}\times \Gal(k_5'/\Q)$ must be a normal subgroup of index dividing 3. Since $\Gal(k_5'/\Q)$ has no normal sbugroup of index 3, it must be index 1. Hence $H$ has index 2, so $J=\Q(\zeta_3)$. If $\Gal(k_2/\Q)\isom\Z/3\Z$, then $\Gal(k_3/\Q)\times \Gal(k_5'/\Q)$ has no normal subgroup of index 3. Hence $J=\Q$.

Let $\sigma_3\in NF_3$ whose image in $S_3$ has order 3. Then $\sigma_2\in NF_2$ is compatible with $\sigma_3$. Let $\sigma_5\in\Gal(k_5'/\Q)$ correspond to the class of the matrix $A$ in $\PGL_2(\F_5)$. By the two paragraphs above, there exists $\sigma'\in \Gal(k_2k_3k_5'/\Q)$ which lifts $\sigma_2,\sigma_3,\sigma_5$. If $K\subset k_2k_3k_5'$, then the only quadratic extension fixed by $\sigma'$ is $\Q(\zeta_3)$. Thus, there exists $\sigma\in\Gal(k_2k_3k_5'K/\Q)$ lifting $\sigma$ and acting nontrivially on $K$. Let $\tau_1\in\Gal(k_2k_3k_5K/\Q)$ be any lift of $\sigma$. We claim that $\tau_1|_{k_5}\in NF_5$. Indeed, observe that $\rho_5(\tau_1|_{k_5})$ can be any one of the following matrices
$$\begin{bmatrix} 0&3\\3&4\end{bmatrix},\begin{bmatrix} 0&1\\1&3\end{bmatrix},\begin{bmatrix} 0&4\\4&2\end{bmatrix},\begin{bmatrix} 0&2\\2&1\end{bmatrix}$$
all of which lie in $NF_5$.

Finally $k_2k_3k_5K\cap k_{n/30}\subset\Q(\zeta_{30})$ by Lemma \ref{lem:gl2quot}. Then by Proposition \ref{prop:surj}, there exists $\tau\in\Gal(k_nK/\Q)$ such that $\tau$ restricts to $\tau_1$, $\tau|_{k_n}\in NF_n$, and $K^\tau=\Q$. 
\qedhere
\end{subcase}
\end{proof}

\begin{cor}\label{ellred}
Let $E/\Q$ be a elliptic curve and $a\in\Z$ satisfying the following conditions.

\begin{enumerate}
    \item For any isogenous $E'$, $E'(\Q)$ is torsion free.
    \item The Galois representation $\rho_\ell\colon\Gal(\Qbar/\Q)\to\GL_2(\Z/\ell\Z)$ on the torsion points on $E$ is surjective for all $\ell>2$ except for possibly one prime.
    \item If $\rho_\ell$ is not surjective for some $\ell>2$, then $\Q(E[2])\cap \Q(E[\ell])=\Q$.
    \item $\sqrt{a}\notin \Q(E[2]), \Q(\sqrt{-3})$.
\end{enumerate}

Then for any integer $n$, there are infinitely many primes $p$ such that $E_p(\F_p)[n]=0$ and $p$ is inert in $\Q(\sqrt{a})$.
\end{cor}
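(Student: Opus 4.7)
My plan is to derive Corollary \ref{ellred} as a direct specialization of Theorem \ref{thm:torcomposite} with the choice $K = \Q(\sqrt{a})$, after handling the trivial case $n=1$ (where $E_p(\F_p)[1]=0$ is automatic and we just need infinitely many inert primes, supplied by Chebotarev). For $n \geq 2$, the bulk of the argument is verifying that the hypotheses of Theorem \ref{thm:torcomposite} hold under conditions (1)--(4).

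First I would check that $K=\Q(\sqrt{a})$ qualifies as an allowable base field in Theorem \ref{thm:torcomposite}. Since condition (4) forces $\sqrt{a}\notin\Q(E[2])$, in particular $\sqrt{a}\notin\Q$, so $K$ is a genuine quadratic extension, hence nontrivial and abelian. The condition $K\not\subset k_2$ is exactly the first half of (4), and since $K$ is quadratic the only way $K\subset\Q(\zeta_3)=\Q(\sqrt{-3})$ would be equality, which the second half of (4) rules out. The two hypotheses on the mod-$\ell$ Galois representations in Theorem \ref{thm:torcomposite} — that $\rho_\ell$ is surjective for all but possibly one odd $\ell\mid n$, and that $k_2\cap k_\ell=\Q$ when the exceptional $\ell$ satisfies $2\ell\mid n$ — are exactly conditions (2) and (3).

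Next I would invoke the equivalence in Theorem \ref{thm:torcomposite} contrapositively. Condition (1) of the corollary says every curve $\Q$-isogenous to $E$ has trivial rational torsion, so $E'(\Q)[n]=0$ for every such $E'$ and every $n\geq 2$; this is the negation of statement (2) of Theorem \ref{thm:torcomposite}. Therefore statement (1) of that theorem also fails, i.e.\ it is \emph{not} the case that $E(\F_p)[n]\neq 0$ for cofinitely many primes $p$ that fail to split completely in $K$. Equivalently, there are infinitely many primes $p$ which do not split completely in $K=\Q(\sqrt{a})$ and for which $E_p(\F_p)[n]=0$.

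Finally, since $K/\Q$ is quadratic, "does not split completely" means "inert or ramified," and only finitely many primes ramify in $K$. Discarding these finitely many, I am left with infinitely many primes $p$ that are inert in $\Q(\sqrt{a})$ with $E_p(\F_p)[n]=0$, as required. There is no real obstacle here — the whole point is that all the technical work has already been done inside Theorem \ref{thm:torcomposite}; the only small things to double-check are that condition (4) is precisely calibrated to the two exclusions in Theorem \ref{thm:torcomposite} and that the quadratic nature of $K$ lets us upgrade "does not split completely" to "inert" up to finitely many exceptions.
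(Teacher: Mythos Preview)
Your proposal is correct and follows exactly the approach of the paper, which simply states that the corollary is a direct consequence of Theorem \ref{thm:torcomposite} applied to $K=\Q(\sqrt{a})$. You have merely (and correctly) spelled out the verifications that condition (4) matches the exclusions on $K$, that conditions (2)--(3) match the Galois-representation hypotheses, that condition (1) negates statement (2) of the theorem, and that for a quadratic field ``does not split completely'' means ``inert'' away from finitely many ramified primes.
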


\begin{proof}
This is a direct consequence of Theorem \ref{thm:torcomposite} applied to $K=\Q(\sqrt{a})$.
\end{proof}

\begin{remark}\label{rmk:conditions}
It is clear that condition (1) is necessary for Corollary \ref{ellred} to hold. Condition (4) is also necessary as shown in Proposition \ref{prop:twotorsion} and Proposition \ref{prop:threetorsion}. Conditions (2) and (3) ensure we do not deal with too many intersections of division fields that are not surjective in the proof. We expect these may not be necessary.
\end{remark}

\bibliographystyle{alpha}
\bibliography{conicbundle}

\end{document}